\newtheorem{theorem}{Theorem}[section]
\newtheorem{lemma}{Lemma}[subsection]
\newtheorem{prop}{Proposition}[subsection]
\newtheorem{rem}{Remark}
\newcommand{\C}{\mathbb{C}}
\newcommand{\Ff}{\mathrm{F}}
\newcommand{\Ee}{\mathrm{E}}
\newcommand{\gl}{\mathrm{GL}}
\newcommand{\Mp}{\mathrm{Mp}}
\newcommand{\ra}{\rightarrow}
\newcommand{\sra}{\twoheadrightarrow}
\newcommand{\hra}{\hookrightarrow}
\newcommand{\Rep}{\mathrm{Rep}}
\newcommand{\id}{\mathrm{Ind}}
\newcommand{\abs}{\lvert-\lvert}
\newcommand{\NN}{\mathbb{N}}
\renewcommand*{\det}{\qopname\relax o{det}}
\newcommand{\mvw}{^{\mathrm{MVW}}}
\newcommand{\irr}{\mathfrak{Irr}}
\newcommand{\ain}[3]{#1\in\{#2,\ldots,#3\}}
\newcommand{\ho}{\mathrm{Hom}}
\newcommand{\soc}{\mathrm{soc}}
\newcommand{\cosoc}{\mathrm{cosoc}}
\newcommand\restr[2]{{\left.\kern-\nulldelimiterspace #1\vphantom{\big|}\right|_{#2}}}
\begin{document}

\title{A note on the Howe Duality conjecture for symplectic-orthogonal and unitary pairs}
\author{Johannes Droschl}
\maketitle
\begin{abstract}
    In this short note we expand on recent results on the degenerate principle series $I(s,\chi)$ of classical groups associated to $s\in \C$ and a quadratic character $\chi$. In particular, we strengthen the result for $s\in \mathbb{R}_{\ge 0}$, which allows us to give as a corollary a new proof of the Howe duality conjecture for symplectic-orthogonal and unitary pairs.
\end{abstract}
\emph{2020 Mathematics Subject Classification: }22E46, 22E50\\
\emph{Contact: }\url{johannes.carl.droschl@univie.ac.at}
\section{Introduction}
Let $\Ff$ be a local, non-archimedean field of characteristic different from $2$. Let $\epsilon\in\{\pm 1\}$, $\Ee$ be either $\Ff$ or a quadratic extension of $\Ff$ and $W$, respectively $V$, a $-\epsilon$-, respectively $\epsilon$-, hermitian vector-space of dimension $n$, respectively $m$. If $\Ee=\Ff$ and $W$ is symplectic, we moreover assume $n$ to be even. We denote the symmetry groups of $W$ and $V$ by $G(W)$ and $H(V)$. To be more precise, if $\Ee=\Ff$, we consider in the case of a symplectic space the corresponding symplectic group and in the case of a quadratic space the corresponding full orthogonal group. If $\Ee\neq \Ff$ we consider the corresponding unitary group. To incorporate the case of $W$ being symplectic and $V$ having odd dimension, one is forced to consider the metaplectic group associated to $W$ instead of its symplectic group. Due to some technical details, which we will describe towards the end of this introduction, we will however not deal with this case.

Then $G(W)\times H(V)$ forms a dual-pair in the metaplectic group $\Mp(V\otimes W)$. For a fixed smooth, additive character $\psi\colon \Ff\ra \C^\times$, we let $\omega_{V,W,\psi}$ be the Weil-representation of the group $\Mp(V\otimes W)$.
For $\pi$ an irreducible, smooth representation of $G(W)$, we define as usual the smooth $H(W)$-representation $\Theta_{V,W,\psi}(\pi)$ as the maximal $\pi$-isotypical quotient of $\restr{\omega_{V,W,\psi}}{G(W)\times H(V)}$.
In \cite{Kudla1986} Kudla showed that if $\pi$ is irreducible, then $\Theta_{V,W,\psi}(\pi)$ is of finite length. One can thus consider the cosocle of $\Theta_{V,W,\psi}(\pi)$, \emph{i.e.} its largest semi-simple quotient, which is usually denoted by $\theta_{V,W,\psi}(\pi)$.

The study of the theta lifts $\Theta_{V,W,\psi}(\pi)$ and $\theta_{V,W,\psi}(\pi)$ lies at the heart of the local theta correspondence and their behavior is to a large extend governed by what is now known as \emph{Howe duality} (in type I). It was conjectured in \cite{Howe1979seriesAI}, \cite{Howe2} and proven in \cite{Waldspurger} in the case $p\neq 2$. In \cite{GanTakeda} a new proof without the assumption on $p$ was given and in \cite{Gan2017} the remaining case of quaternionic dual pairs was covered.
\begin{theorem}[{Howe duality conjecture in type I, {\cite{Waldspurger}, \cite{GanTakeda}, \cite{Gan2017}}}]
    Let $\pi$ be an irreducible, smooth representation of $G(W)$.
    \begin{enumerate}
        \item If $\Theta_{V,W,\psi}(\pi)$ is non-zero, then $\theta_{V,W,\psi}(\pi)$ is irreducible.
        \item If $\pi'$ is a second irreducible, smooth representation of $G(W)$ with \[\theta_{V,W,\psi}(\pi)\cong \theta_{V,W,\psi}(\pi')\neq 0,\] then $\pi\cong \pi'$.
    \end{enumerate}
\end{theorem}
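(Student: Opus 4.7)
The plan is to follow the general strategy of Gan-Takeda and reduce both assertions to a multiplicity-at-most-one statement for a suitable degenerate principal series, which can then be settled using the strengthened analysis of $I(s,\chi)$ for $s\in\mathbb{R}_{\ge 0}$ promised in the introduction.

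First I would set up the standard see-saw diagram associated to the pair of dual pairs
\[
\bigl(G(W)\times G(W),\; H(V)\bigr)\subset \bigl(G(W\oplus W^-),\; H(V)\times H(V)\bigr),
\]
combine it with the MVW involution and Rallis' embedding to realise $\Theta_{V,W,\psi}(\pi)\boxtimes \Theta_{V,W,\psi}(\pi')^{\vee}$ as a subquotient of a degenerate principal series $I(s_0,\chi_V)$ of $G(W\oplus W^-)$ at an explicit $s_0\in\mathbb{R}_{\ge 0}$ depending only on $\dim V$ and $\dim W$. Applying $\ho_{G(W)\times G(W)}(\pi\boxtimes\pi'^{\vee},-)$ then reduces both parts of the theorem to the bound
\[
\dim \ho_{G(W)\times G(W)}\bigl(\pi\boxtimes\pi'^{\vee},\, I(s_0,\chi_V)\bigr)\le 1,
\]
with equality holding exactly when $\pi\cong \pi'$.

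Next I would feed in the strengthened description of $I(s,\chi)$ for $s\in\mathbb{R}_{\ge 0}$ advertised in the abstract. I expect this to control the Jordan-H\"older constituents of $I(s_0,\chi_V)$ together with their multiplicities in the socle and cosocle finely enough that the $\ho$-space above can only see contributions from the ``open-orbit'' term of the Kudla-Rallis filtration. Combined with Frobenius reciprocity and second adjointness along the Siegel parabolic of $G(W\oplus W^-)$, this should turn any map $\pi\boxtimes\pi'^{\vee}\to I(s_0,\chi_V)$ into a nonzero $G(W)$-equivariant pairing $\pi\otimes \pi'\mvw\to \C$, which exists and is unique up to scalar precisely when $\pi\cong \pi'$. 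Running the same bookkeeping for the pair $(\pi,\pi)$ forces $\Theta_{V,W,\psi}(\pi)$ to have at most one irreducible quotient, giving assertion (1); specialising to $\theta_{V,W,\psi}(\pi)\cong \theta_{V,W,\psi}(\pi')$ gives (2).

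The step I expect to be hardest is controlling the closed-orbit contributions, \emph{i.e.}\ the cases where $s_0$ sits at a reducibility point of $I(s,\chi_V)$; this is exactly the regime where the earlier proofs had to work the most. I anticipate that the newly strengthened statement is tailored to this situation: once the socle (and hence, via contragredient, the cosocle) of $I(s_0,\chi_V)$ at the relevant $s_0$ is shown to be sufficiently rigid, a short exactness argument should upgrade any a priori weaker bound to the strict multiplicity one needed above, and should do so uniformly across the symplectic-orthogonal and unitary cases.
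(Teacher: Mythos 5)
Your overall route is the same as the paper's: the see-saw for $(G(W)\times G(W),H(V))\subset (G(W+W^-),H(V)\times H(V))$ together with the MVW-involution, Rallis' realisation of the doubled theta lift as a quotient of a degenerate principal series, and then the strengthened multiplicity statement at $s\in\mathbb{R}_{\ge 0}$ (\Cref{T:soupedup}). Two remarks on the execution. First, the natural direction coming from the see-saw is $\ho_{G(W)\times G(W)}(I(s_{V,W},\chi_V),\pi'\otimes \chi_V(\det_{G(W)})\pi^\lor)$, i.e.\ maps \emph{out of} the principal series, since $I(s_{V,W},\chi_V)\sra \Theta_{V,W+W^-,\psi}(\chi_W)$ and the latter surjects onto the relevant $\pi'\otimes(\cdot)$-isotypic quotients; your maps \emph{into} $I(s_0,\chi_V)$ would need an extra dualisation to match the statement you want to quote. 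Second, the "hard step" you anticipate (controlling closed-orbit terms of the Kudla--Rallis filtration at reducibility points) does not need to be reopened for this corollary: it is exactly what is already packaged inside \Cref{T:soupedup}, which you may simply apply as a black box.

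The genuine gap is your assertion that the relevant parameter $s_0$ lies in $\mathbb{R}_{\ge 0}$ "depending only on $\dim V$ and $\dim W$". The doubling point is $s_{V,W}=\frac{n+\epsilon_0-m}{2}$, which is \emph{negative} as soon as $m>n+\epsilon_0$, i.e.\ precisely in the going-up range of the correspondence; and the multiplicity-one statement genuinely fails at negative $s$, as \Cref{Rem} shows: at $s_0=-\frac{n}{4}$ the series $I(s_0,\chi)$ has $(\pi_+\oplus\pi_-)\otimes(\pi_+\oplus\pi_-)$ as a quotient with $\pi_+\ncong\pi_-$, so no symmetry $s\mapsto -s$ or "uniform" rigidity argument can rescue this case. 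The missing idea is the switch of the two members of the dual pair: when $m>n+\epsilon_0$ one uses that $\theta_{V,W,\psi}(\pi)\neq 0$ forces $\pi$ to be a direct summand of $\theta_{W,V,\psi}(\theta_{V,W,\psi}(\pi))$, and then runs the identical see-saw argument with $V$ doubled instead of $W$, where the doubling parameter is nonnegative and \Cref{T:soupedup} applies. As written, your argument only covers the range $m\le n+\epsilon_0$.
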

The non-vanishing of $\Theta_{V,W,\psi}(\pi)$ can be tightly controlled by the so-called \emph{conservation relation}, which was proved in \cite{sun}. Another proof of it was given in \cite{DroKudRa} by studying the degenerate principal series $I(s,\chi)$ associated to $s\in \C$ and a quadratic character $\chi$, a smooth representation of $G(W)\times G(W)$.
The main result of this paper is a slight strengthening of \cite[Theorem 3]{DroKudRa} for $s\in \mathbb{R}_{\ge 0}$.
\begin{theorem}\label{T:intro}
    Let $\pi,\pi'$ be two irreducible, smooth representation of $G(W)$ and let $s\in \mathbb{R}_{\ge 0}$. Then
    \[\dim_\C\ho_{G(W)\times G(W)}(I(s,\chi),\pi\otimes \chi(\det_{G(W)})\pi')=\delta_{\pi',\pi^\lor}.\]
\end{theorem}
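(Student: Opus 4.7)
The plan is to analyse $I(s,\chi)$ as a $G(W)\times G(W)$-module via the geometric filtration induced by the $G(W)\times G(W)$-orbit stratification of the relevant Lagrangian Grassmannian. The orbit set is finite: there is a unique open orbit $X_0$ and the remaining orbits $X_i$, $1\le i\le k$, are indexed by the dimension of a certain isotropic subspace attached to the parametrising Lagrangian. This yields a $G(W)\times G(W)$-stable filtration
\[
0 = F_{k+1} \subset F_k \subset \cdots \subset F_0 = I(s,\chi),
\]
whose successive quotients $R_i := F_i/F_{i+1}$ are compactly induced from the stabiliser of a base point of $X_i$. By left-exactness of $\ho_{G(W)\times G(W)}(-,\pi\otimes \chi(\det_{G(W)})\pi')$ the theorem reduces to a layer-by-layer analysis.

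By the doubling formalism of Kudla--Rallis, the open-orbit layer $R_0$ is compactly induced from the diagonal embedding of $G(W)$ into $G(W)\times G(W)$, acting through $\chi\circ\det$. Frobenius reciprocity combined with the MVW involution then gives
\[
\ho_{G(W)\times G(W)}\bigl(R_0,\pi\otimes \chi(\det_{G(W)})\pi'\bigr) \;\cong\; \ho_{G(W)}\bigl(\pi,(\pi')^\vee\bigr),
\]
of dimension $\delta_{\pi',\pi^\vee}$ by Schur. The theorem therefore reduces to proving
\[
\ho_{G(W)\times G(W)}\bigl(R_i,\pi\otimes \chi(\det_{G(W)})\pi'\bigr) = 0 \quad\text{for every } 1\le i\le k.
\]

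For these closed-orbit layers, second adjunction identifies the Hom space with a Hom between Jacquet modules $r_P(\pi), r_P(\pi')$ along the maximal parabolic $P\subset G(W)$ associated to the orbit, tensored with an unramified twist of the form $\chi\cdot|\det|^{s+a_i}$, where $a_i>0$ is a positive real constant depending only on orbit combinatorics. For $\Re(s)>0$ the required coincidence of central-character exponents is incompatible with the cuspidal support of $\pi$ and $\pi'$; this is essentially the content of \cite[Theorem 3]{DroKudRa} in that range. The main obstacle---and the genuine content of the strengthening to $s\in \mathbb{R}_{\ge 0}$---is the boundary case $s=0$, where the exponent $a_i$ could a priori coincide with an exponent in the cuspidal support of $\pi$ or $\pi'$. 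I would attack this by combining an analytic-continuation argument (using upper semi-continuity of $\dim\ho$ for the Bernstein family $s\mapsto I(s,\chi)$, together with the standard intertwining functional equation relating $s$ and $-s$) with a direct structural analysis of the composition series of $I(0,\chi)$ to rule out any jump. Once such vanishing is established for every $R_i$, the long exact sequence of Hom arising from the filtration yields the stated dimension formula.
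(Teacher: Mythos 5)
Your filtration is the right starting point (it is exactly the Kudla--Rallis orbit filtration used in the paper, with $\sigma_{0,s}\cong S(G(W))$ the open-orbit layer, whose Hom space is computed as you say), but the core of your plan fails at the higher layers. It is simply not true that $\ho_{G(W)\times G(W)}(R_i,\pi\otimes\chi(\det)\pi')=0$ for $i\ge 1$, nor is there any ``incompatibility of central-character exponents'' for $\Re(s)>0$: by Frobenius reciprocity the layer $\sigma_{t,s}$ has a nonzero Hom to $\pi\otimes\chi(\det)\pi'$ as soon as $\pi$ is a quotient of $\chi(\det_{G_t})\lvert\det_{G_t}\lvert^{s+\frac{t}{2}}\rtimes\tau$ (with a matching condition on $\pi'$), and such $\pi$ exist in abundance for any real $s$. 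Your appeal to \cite[Theorem 3]{DroKudRa} here mischaracterizes that result: it is the multiplicity-one statement $\dim\ho(I(s,\chi),\pi\otimes\chi(\det)\pi^\lor)=1$ for \emph{all} $s\in\C$, not a vanishing statement for boundary strata. The statement one actually needs, and what the paper proves (\Cref{L:only}), is conditional: any nonzero map $\sigma_{t,s}\ra\pi\otimes\chi(\det)\pi'$ with $t\ge 1$ and $s\ge 0$ forces $\pi'\cong\pi^\lor$. This is established by induction on $\dim_\Ee W$ using derivatives: one sets $\xi=\lvert-\lvert^{-s-t+\frac{1}{2}}$ (here $s\ge 0$, $t\ge 1$ is used precisely to guarantee $\xi\ncong\xi^\lor$), applies the Geometric Lemma, MVW, Frobenius and Bernstein reciprocity, and the factorization lemma \cite[Lemma 2.9.3]{DroKudRa} to show $D_\xi(\pi')\cong D_\xi(\pi^\lor)$, whence $\pi'\cong\pi^\lor$ by \Cref{L:propder}. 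Moreover, even if your vanishing claim were true, your long-exact-sequence argument would only give the upper bound: since the open-orbit layer is a \emph{sub}module and Hom is left exact, extending the open-orbit functional to all of $I(s,\chi)$ requires an $\mathrm{Ext}^1$ vanishing you have not addressed; the paper gets the exact value $1$ for $\pi'\cong\pi^\lor$ by quoting \cite[Theorem 3]{DroKudRa}, not from the $t=0$ layer alone.

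Your proposed treatment of $s=0$ also does not work as stated. Upper semicontinuity of $s\mapsto\dim\ho(I(s,\chi),\pi\otimes\chi(\det)\pi')$ allows the dimension to jump \emph{up} at special points, so generic vanishing cannot rule out a jump at $s=0$; and the example in \Cref{Rem} (at $s_0=-\frac{n}{4}$, where $I(s_0,\chi)$ has the quotient $(\pi_+\oplus\pi_-)\otimes(\pi_+\oplus\pi_-)$) shows the theorem genuinely fails for some real $s<0$, so positivity must enter structurally rather than through deformation. In the paper's argument $s\ge 0$ enters only twice --- to ensure $\xi\ncong\xi^\lor$ and to exclude the $k_1\ge 1$ terms of the Geometric Lemma (which would force $2s=-t<0$) --- and the point $s=0$ requires no separate treatment.
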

Similarly to \cite{GanTakeda}, a key tool in our proof are \emph{derivatives}, see \cite{AtoMin}.
As an easy corollary of the above theorem we are then able to deduce the Howe duality conjecture for the aforementioned dual-pairs in type I.

Finally, let us remark two things. Firstly, note that \Cref{T:intro} cannot be extended to arbitrary $s\in \C$, see for example \Cref{Rem}.
Secondly, one would obtain a proof for all dual-pairs of type I, if \cite[Theorem 3]{DroKudRa} could be shown also for these cases. In the metaplectic-case, the main obstruction lies at the moment in the fact that the proof of \cite[Theorem 3]{DroKudRa} hinges on Howe-duality in type II, which at the moment is only known for $\mathrm{GL}_n(\Ff)$, \emph{cf.} \cite{Minguez}, and not for its metaplectic-twofold cover, which would be needed for the metaplectic case.
For quaternionic dual pairs, the story is more involved since we have no longer the MVW-involution at our disposal, which plays a critical role in the proof of \cite[Theorem 3]{DroKudRa}.
\subsection*{Acknowledgements}
I want to express my gratitude towards Wee Teck Gan for pointing out that the results of the authors PhD thesis could be used to obtain a new proof of the Howe duality conjecture, as well as Binyong Sun, for encouraging me during my PhD-defense to consider this application. Finally I want to thank Alberto M{\'i}nguez for taking the time to listen to earlier versions of this note.
This work has been supported by the projects PAT4628923 and PAT4832423 of the Austrian Science Fond (FWF).
\section{Preliminaries}
\counterwithin{theorem}{section}
Let $V$ and $W$ be as in the introduction and denote by $\abs\colon\Ee\ra \mathrm{R}_{\ge 0}$ the absolute value of $\Ee$.
Let $G$ be a reductive $\Ff$-group and by abuse of notation we will also write $G$ for the $\Ff$-points of $G$. Throughout the rest of the paper we will denote by $\Rep(G)$ the category of smooth, complex, finite length representations of $G(\Ff)$. The trivial representation is denoted by $\mathbf{1}_G$, for $\pi,\pi'\in \Rep(G)$ we mark surjective and injective morphisms from $\pi$ to $\pi'$ by $\pi\sra\pi'$ and $\pi\hra\pi'$, and the identity morphism on $\pi$ is denoted by $\mathds{1}_\pi$. We denote the contragradient representation of $\pi$ by $\pi^\lor$ and its socle respectively cosocle, \emph{i.e.} the maximal semi-simple subrepresentation respectively quotient, by $\soc(\pi)$ respectively $\cosoc(\pi)$. We let $K(G)$ be the Grothendieck group of $\Rep(G)$ and denote by $[\pi]\in K(G)$ the element corresponding to $\pi\in \Rep(G)$. Denote by $S(G)$ the space of locally constant and compactly supported functions $f\colon G\ra \C$ on which $G\times G$ acts by left-right translation. Finally, write for the set of isomorphism classes of irreducible representations in $\Rep(G)$ the symbol $\irr(G)$. 

For $P\subseteq G$ a parabolic subgroup with Levi-decompostion $P=MN$, we denote normalized parabolic induction and the Jacquet functor by
\[\id_P^G\colon \Rep(M)\ra\Rep(G),\, r_P\colon \Rep(G)\ra\Rep(M).\]
These functors are exact and by \emph{Frobenius reciprocity} $r_P$ is the left adjoint of $\id_P^G$. If we denote by $\overline{P}$ the opposite parabolic subgroup, $r_{\overline{P}}$ is the right-adjoint of $\id_P^G$, a fact which we will refer to as \emph{Bernstein reciprocity}.
We say $\rho\in\irr(G)$ is cuspidal if for all non-trivial parabolic subgroups $P\subseteq G$, $r_P(\rho)=0$.
Finally, $\id_P^G$ commutes with $(-)^\lor$, whereas $(-)^\lor\circ r_P=r_{\overline{P}}\circ (-)^\lor$. 
\subsection{ The general linear group}
We write $G_n\coloneq \mathrm{GL}_n(\Ee)$ and fix the minimal parabolic subgroup $B_n$ of upper triangular matrices. We are going to recall the representation theory of the general linear group as presented in {\cite{Zel}}.
To a partition $(\alpha_1,\ldots,\alpha_k)$ of $n$ we associate the parabolic subgroup of $G_n$ containing $B_n$, whose Levi-component consists of the block-diagonal matrices of the form $G_{\alpha_1}\times\ldots\times G_{\alpha_k}$. Parabolic induction and the Jacquet functor with respect to $P_\alpha$ are then denoted as usual by
\[\pi_1\times\ldots\times\pi_k\coloneq \id_{P_\alpha}^{G_n}(\pi_1\otimes\ldots\otimes \pi_k)\text{ and } r_\alpha\coloneq r_{P_\alpha}.\]
The product \[\times\colon \Rep(G_n)\times \Rep(G_m)\ra \Rep(G_{n+m})\] is not commutative, however since parabolic induction is exact it induces a product on \[K(G_n)\times K(G_m)\ra K(G_{n+m}),\] which turns out to be commutative.
In particular if $\pi\times\pi'$ is irreducible, $\pi\times\pi'\cong \pi'\times \pi$. For $r\in \NN$, we write \[\pi^{\times r}\coloneq \overbrace{\pi\times\ldots\times\pi}^{ r}.\]
Finally, we recall the cuspidal support of $\pi$, \emph{i.e.} the well-defined, formal sum $\rho_1+\ldots+\rho_k$, where $\rho_1\otimes\ldots\otimes \rho_k$ is a cuspidal representation such that $\pi\hra\rho_1\times\ldots\times\rho_k$.

We recall the following properties of induced representations.
\begin{theorem}[{\cite{Zel}}]\label{T:induced}
Let $\xi\colon \Ee^\times\ra\C^\times$ be a character, $r\in \mathbb{N}$ and $s\in \C$. 
\begin{enumerate}
    \item The representation $\xi^{\times r}$ is irreducible.
    \item The representation $\lvert\det_{G_n}\lvert^s\times\xi^{\times r}$ is irreducible if and only if \[\xi\notin \{\lvert-\lvert^{s-\frac{n+1}{2}},\lvert-\lvert^{s+\frac{n+1}{2}}\}.\]
    \item If $\xi=\lvert-\lvert^{s-\frac{n+1}{2}}$, \[\cosoc(\lvert\det_{G_n}\lvert^s\times\xi)=\lvert\det_{G_{n+1}}\lvert^{s-\frac{1}{2}}=\soc(\xi\times \lvert\det_{G_n}\lvert^s).\]
    \item If $\xi=\lvert-\lvert^{s+\frac{n+1}{2}}$,
\[\soc(\lvert\det_{G_n}\lvert^s\times\xi)=\lvert\det_{G_{n+1}}\lvert^{s+\frac{1}{2}}=\cosoc(\xi\times \lvert\det_{G_n}\lvert^s).\]
\item For a partition $\alpha=(\alpha_1,\ldots,\alpha_k)$, 
\[r_\alpha(\lvert\det_{G_n}\lvert^s)=\lvert\det_{G_{\alpha_1}}\lvert^{-\frac{n-\alpha_1}{2}}\otimes\ldots\otimes \lvert\det_{G_{\alpha_i}}\lvert^{\sum_{j=1}^{i-1}\alpha_j{-\frac{n-\alpha_{i}}{2}}}\otimes\]\[\otimes \ldots\otimes \lvert\det_{G_{\alpha_k}}\lvert^{\frac{n-\alpha_k}{2}}.\]
\item The cuspidal support of $\lvert\det_{G_n}\lvert^s$ equals to $\lvert-\lvert^{s+\frac{1-n}{2}}+\ldots+\lvert-\lvert^{s+\frac{n-1}{2}}$.
\end{enumerate}
\end{theorem}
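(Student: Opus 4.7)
The plan is to reduce every item to Zelevinsky's classification of irreducible representations of $G_n$ by segments of cuspidals, after first identifying where the one--dimensional representation $\lvert\det_{G_n}\lvert^s$ sits in that classification. I would start with item (5), since the other claims rely on knowing the cuspidal support.  Because $\lvert\det_{G_n}\lvert^s$ is a character, the normalized Jacquet functor is simply its restriction to the Levi twisted by $\delta_{P_\alpha}^{-1/2}$. A direct computation of $\delta_{P_\alpha}$ for the standard parabolic associated to $\alpha=(\alpha_1,\ldots,\alpha_k)$ yields the stated product of determinantal characters. Item (6) is then immediate by taking $\alpha=(1,\ldots,1)$ in (5) (the resulting Jacquet module has a single irreducible constituent, which is the cuspidal support).

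From (6) I read off that $\lvert\det_{G_n}\lvert^s$ corresponds, in Zelevinsky's parametrization, to the segment
\[\Delta_s=\bigl[\lvert-\lvert^{s-\tfrac{n-1}{2}},\ldots,\lvert-\lvert^{s+\tfrac{n-1}{2}}\bigr],\]
and more precisely that $\lvert\det_{G_n}\lvert^s=Z(\Delta_s)$, the one-dimensional Zelevinsky quotient. Item (1) now follows from the Bernstein--Zelevinsky irreducibility criterion: the singleton segment $\{\xi\}$ is never linked to itself, so any $r$--fold product $\xi^{\times r}$ is irreducible. For item (2), the same criterion reduces the question to whether $\Delta_s$ and $\{\xi\}$ are linked; by inspection of the endpoints of $\Delta_s$ this happens precisely when $\xi\in\{\lvert-\lvert^{s-\frac{n+1}{2}},\lvert-\lvert^{s+\frac{n+1}{2}}\}$.

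For (3) and (4), in the reducible case the two segments $\Delta_s$ and $\{\xi\}$ are linked and their union is a segment $\Delta'$ of length $n+1$. Zelevinsky's structural theorem for products of linked segments then identifies the unique irreducible subrepresentation, respectively quotient, of $\xi\times\lvert\det_{G_n}\lvert^s$ and $\lvert\det_{G_n}\lvert^s\times\xi$ with $Z(\Delta')$. Computing the midpoint of $\Delta'$ exhibits $Z(\Delta')$ as the announced character $\lvert\det_{G_{n+1}}\lvert^{s\pm \tfrac12}$; the swap between socle and cosocle on the two sides comes from the contragredient, using that $Z(\Delta')$ is self-dual up to an unramified twist and that $(-)^{\lor}$ interchanges socle and cosocle while sending $\pi_1\times\pi_2$ to $\pi_2^\lor\times\pi_1^\lor$.

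The main obstacle, if one insisted on a self-contained treatment, would be to set up enough of the Bernstein--Zelevinsky machinery, i.e.\ the classification of irreducibles via multisegments and the linkage criterion for products of segment representations; everything else in the statement is a short bookkeeping exercise once these tools are available. Since the theorem is quoted from \cite{Zel}, in practice one simply invokes these inputs.
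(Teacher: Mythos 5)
The paper offers no argument for this statement at all: it is quoted wholesale from Zelevinsky, so the only benchmark is whether your reduction to the segment formalism is accurate. In substance it is: identifying $\lvert\det_{G_n}\lvert^s$ with $Z\bigl(\bigl[\lvert-\lvert^{s-\frac{n-1}{2}},\ldots,\lvert-\lvert^{s+\frac{n-1}{2}}\bigr]\bigr)$ via the Jacquet-module/modulus-character computation, and then invoking the linkage criterion and the structure of a product of two linked segments, is exactly the content behind the citation; your items (1), (2), (5), (6) are correct as sketched (your computation in (5) in fact restores the twist by $s$ that is dropped in the paper's displayed formula, which is evidently a typo there).

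The one genuine slip is in the last step of (3)--(4): the formula you use, $(\pi_1\times\pi_2)^\lor\cong\pi_2^\lor\times\pi_1^\lor$, is not the contragredient of a normalized induced representation. The correct statement is $(\pi_1\times\pi_2)^\lor\cong\pi_1^\lor\times\pi_2^\lor$ (induction from the \emph{same} parabolic commutes with $(-)^\lor$); the order-reversing variant is what the Gelfand--Kazhdan involution $g\mapsto{}^tg^{-1}$ produces, but that functor is covariant and therefore does \emph{not} interchange socle and cosocle. Already for $n=1$ your formula would make $\lvert-\lvert^{s}\times\lvert-\lvert^{s-1}$ self-dual, forcing its socle and cosocle to agree, which is false. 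With the correct formula, contragredience does not relate the two orderings inside item (3); it exchanges item (3) with item (4) (replacing $s$ by $-s$ and $\xi$ by $\xi^{-1}$, and swapping $\soc$ with $\cosoc$). So you must obtain both displayed equalities of (3) --- i.e.\ the identification of the unique irreducible quotient of $\lvert\det_{G_n}\lvert^s\times\xi$ \emph{and} of the unique irreducible submodule of $\xi\times\lvert\det_{G_n}\lvert^s$ --- directly from Zelevinsky's structure theorem for a product of two linked segments (which indeed gives both orderings), and only then deduce (4) by duality; alternatively, verify one of the equalities by hand via Frobenius/Bernstein reciprocity and the Jacquet-module formula of (5). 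This is an easy repair, but as written the socle/cosocle bookkeeping rests on a false identity.
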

If $\pi$ is an irreducible representation of $G_n$, define ${}^\mathfrak{c}\pi$ as the representation obtained by twisting the action of $G_n$ by the generator $\mathfrak{c}\in \mathrm{Gal}(E/F)$, where $\mathfrak{c}$ acts on $g\in G_n$ componentwise.

\subsection{ Classical groups}
Recall now the space $W$ from the introduction, whose Witt-index we denote by $q_W$, \emph{i.e.} the maximal dimension of an isotropic subspace. We recall that if $X_t$ is an isotropic subspace of $W$ with $\dim_\Ee W=t$, we have a decomposition $W=X_t\oplus W_t\oplus Y_t$, where $Y_t$ is another isotropic subspace of $W$ of dimension $t$, orthogonal to $X_t$ and $W_t$ is a $-\epsilon$-herimitan space of dimension $n-2t$. We recall then that the stabilizer of this space is isomorphic to a parabolic subgroup with Levi-component isomorphic to $G_t\times G(W_t)$. We fix for the rest of the paper a minimal parabolic subgroup $B_W$ of $G(W)$ and denote for $\ain{t}{1}{q_W}$ by $Q_t$ the parabolic subgroup containing $B_W$ with Levi-component $G_t\times G(W_t)$. More generally, consider a flag of isotropic subspace $F=(0=X^0\subseteq X^1\subseteq \ldots\subseteq X^{k})$ with $\dim_\Ee X^{i-1}\backslash X^{i}=\alpha_i$. Then the Levi-component of the stabilizer of the flag $Q(F)$ is of the form $G_{\alpha_1}\times\ldots \times G_{\alpha_k}\times G(W_{\alpha_1+\ldots+\alpha_k})$. We denote the standard parabolic subgroup corresponding to such a partition $\alpha$ with $\alpha_1+\ldots+\alpha_k\le q_W$ by $Q_\alpha$.
We then write for \[\pi_1\otimes\ldots\otimes \pi_k\otimes \tau\in \Rep(G_{\alpha_1})\times \ldots \times \Rep(G_{\alpha_k})\times G(W_{\alpha_1+\ldots+\alpha_k})\] for the parabolically induced representation
\[\pi_1\times\ldots\times \pi_k\rtimes\tau\coloneq \id_{Q_\alpha}^{G(W)}(\pi_1\otimes\ldots\otimes \pi_k\otimes \tau).\]
 Recall the M{\oe}glin-Vign\'eras-Waldspurger involution, \emph{cf.} \cite[p.91]{MVW}, \[\mathrm{MVW}\colon \Rep(G(W))\rightarrow \Rep( G(W))\] which is covariant, exact and satisfies
\begin{enumerate}
    \item $\mathrm{MVW}\circ\mathrm{MVW}=\mathrm{id}$,
    \item $\pi\mvw\cong \pi^\lor$ if $\pi$ is irreducible,
    \item For $\ain{t}{1}{q_W}$, $\alpha=(\alpha_1,\ldots,\alpha_k)$ a partition of $t$, $\pi_i\in \irr(G_{\alpha_i})$, $\tau\in\irr(G(W_t))$,\[(\pi_1\times\ldots\times \pi_k\rtimes \tau)\mvw\cong{}^\mathfrak{c}\pi_1\times\ldots\times {}^\mathfrak{c}\pi_k\rtimes\tau\mvw.\]
\end{enumerate}
Let $\ain{t,r}{1}{q_w}$ and
we define \[\mathfrak{B}(t,r)\coloneq\{(k_1,k_2,k_3)\in \NN^3:\, k_1+k_2+k_3=t,\, k_1+k_3\le r,\, r+k_2\le q_W\}.\]
 To $(k_1,k_2,k_3)\in \mathfrak{B}(t,r)$ we can associate an certain element $w_{k_1,k_2,k_3}\in \gl(W)$ with the following properties. Conjugating the Levi-subgroup of the parabolic subgroup $Q_{(k_1,k_2,k_3,r-k_1-k_2)}$ by $w_{k_1,k_2,k_3}$ exchanges the two factors of $G_{k_2}$ and $G_{r-k_1-k_2}$, but leaves them otherwise unchanged, and acts on the two factors $G_{k_1}$ and $G(W_{t-r+k_1+k_3})$ trivially. Finally, $w_{k_1,k_2,k_3}$ preserves $G_{k_3}$. If $\pi_3$ is an irreducible representation of $G_{k_3}$, twisting the action by conjugation by $w_{k_1,k_2,k_3}$ yields the representation ${}^\mathfrak{c}\pi_3^\lor$. We denote by $\mathrm{Ad}(w_{k_1,k_2,k_3})$ conjugation by $w_{k_1,k_2,k_3}$.
\begin{lemma}[{(Geometric Lemma), \cite{Ber}, \cite[Lemma 5.1]{TADIC19951}}]\label{L:GL}
Let $\ain{t,r}{1}{q_w}$ and $\pi$ an admissbile representation of $G_t$ and $\tau$ an admissible representation of $G(W_t)$. 
Then $r_{Q_r}(\pi\rtimes\tau)$ admits a filtration whose subquotients are indexed by $(k_1,k_2,k_3)\in \mathfrak{B}(t,r)$ and are of the following form: \[\Pi_{k_1,k_2,k_3}=\id_{P_{k_1,r-k_1,k_2}\times Q_{k_2}}^{G_r\times G(W_r)}(\mathrm{Ad}(w_{k_1,k_2,k_3})(r_{k_1,k_2,k_3}(\pi)\otimes r_{Q_{r-k_1-k_3}}(\tau))).\]
\end{lemma}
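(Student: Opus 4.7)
The plan is to specialize the general Bernstein--Zelevinsky geometric lemma to the parabolic pair $(Q_t, Q_r)$ in $G(W)$ and then read off the combinatorial parametrization from the geometry of pairs of isotropic flags.

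First, I would parametrize the double cosets $Q_t \backslash G(W)/Q_r$. Since $Q_t$ and $Q_r$ are the stabilizers of the isotropic subspaces $X_t$ and an isotropic subspace $X_r'$ of dimension $r$, Witt's theorem identifies a double coset with a relative position of two isotropic subspaces inside $W$. Such a relative position is determined by the triple
\[ k_1 = \dim(X_t \cap X_r'), \quad k_3 = \dim\bigl((X_t \cap (X_r')^\perp)/(X_t \cap X_r')\bigr), \quad k_2 = t - k_1 - k_3, \]
where $k_1$ measures the honest overlap, $k_3$ measures the piece of $X_t$ paired isotropically with $X_r'$ via the hermitian form, and $k_2$ measures the piece of $X_t$ transverse to $(X_r')^\perp$. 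The constraint $k_1 + k_3 \le r$ is forced by fitting the two relevant pieces inside $X_r'$, while $r + k_2 \le q_W$ is forced because $X_r'$ together with the further $k_2$-dimensional isotropic piece must fit inside the Witt index. This gives exactly the set $\mathfrak{B}(t,r)$.

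Next, for each $(k_1, k_2, k_3) \in \mathfrak{B}(t,r)$ I would fix an explicit Weyl-type representative $w_{k_1, k_2, k_3} \in G(W)$, and apply the geometric lemma in Bernstein's form: the Jacquet functor $r_{Q_r}$ of the induced representation $\pi \rtimes \tau$ carries a filtration indexed by these double cosets, whose subquotients are obtained by first restricting $\pi \otimes \tau$ along the intersection parabolic $Q_t \cap w_{k_1,k_2,k_3} Q_r w_{k_1,k_2,k_3}^{-1}$, then twisting by $\mathrm{Ad}(w_{k_1, k_2, k_3})$, and finally inducing into $Q_r$. The identification of the intersection with the block parabolic having Levi $G_{k_1} \times G_{k_2} \times G_{k_3} \times G_{r - k_1 - k_3}$ on the $G_t$-factor yields $r_{k_1, k_2, k_3}(\pi)$, while the identification on the $G(W_t)$-factor yields $r_{Q_{r-k_1-k_3}}(\tau)$.

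The main obstacle will be the careful bookkeeping for the element $w_{k_1, k_2, k_3}$. One has to verify that the representative can be chosen so that conjugation swaps the $G_{k_2}$-block coming from $\pi$ with the complementary $G_{r-k_1-k_2}$-block coming from $\tau$, leaves $G_{k_1}$ and $G(W_{t - r + k_1 + k_3})$ untouched, and acts on the $G_{k_3}$-block by sending $\pi_3 \mapsto {}^{\mathfrak{c}}\pi_3^\vee$. The twist by Galois conjugate contragredient is precisely what records that the $k_3$-piece of $X_t$ is paired nondegenerately against a $k_3$-piece inside $X_r'$ via the hermitian form, so that a representation of the stabilizer on one side becomes the conjugate-dual representation on the other. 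Once this twist is pinned down, the subquotient takes exactly the asserted form $\Pi_{k_1, k_2, k_3}$. Since the computation is carried out in detail in \cite[Lemma 5.1]{TADIC19951}, the actual proof I would write would reduce to citing that specialization and checking that our conventions for $Q_\alpha$, $\rtimes$ and $\mathrm{Ad}(w_{k_1,k_2,k_3})$ agree with Tadi\'c's.
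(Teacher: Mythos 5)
The paper does not actually prove this lemma; it is stated as a citation of Bernstein's geometric lemma and of \cite[Lemma 5.1]{TADIC19951}, so there is no in-text proof to compare against. Your sketch correctly reproduces the standard argument underlying those references: parametrize $Q_t\backslash G(W)/Q_r$ by relative positions of the two isotropic subspaces via Witt's theorem, apply the Bernstein--Zelevinsky filtration over these double cosets, and compute each subquotient by restriction along the intersection parabolic, twisting by the chosen Weyl representative, and re-inducing; the conjugate-contragredient twist on the $k_3$-block is exactly the footprint of the nondegenerate pairing between the corresponding pieces of $X_t$ and $X_r'$. Since you too ultimately propose to cite Tadi\'c and check conventions, your route is essentially the same as the paper's.

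One small caution: your explicit dimension formulas assign $k_3 = \dim\bigl((X_t\cap (X_r')^\perp)/(X_t\cap X_r')\bigr)$, i.e.\ the part of $X_t$ \emph{perpendicular} to $X_r'$ modulo the intersection, and set $k_2 = t - k_1 - k_3$. In the paper's convention, read off from the semisimplified formula $[\pi_1\times\pi_4\times{}^{\mathfrak c}\pi_3^\lor\otimes\pi_2\rtimes\sigma']$, the $k_3$-piece is the one that pairs \emph{nondegenerately} with $X_r'$ (it contributes the dual block inside $G_r$), while the $k_2$-piece is the one landing in $W_r$ (it contributes $\pi_2$ on the $G(W_r)$-side). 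So your labels of $k_2$ and $k_3$ are swapped relative to the paper; your own prose description (``$k_3$ measures the piece paired nondegenerately, $k_2$ measures the piece transverse to $(X_r')^\perp$'') is also internally inconsistent with the displayed formula. This is a bookkeeping slip rather than a conceptual gap, but it is exactly the kind of convention-matching you flag at the end, so it should be fixed before citing Tadi\'c.
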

If we only care about the semisimplification of $\Pi_{k_1,k_2,k_3}$, it thus takes the following form.
Write the semisimplification of $r_{(k_1,k_2,k_3)}(\pi)$ as the sum of irreducible representations of the form $\pi_1\otimes \pi_2\otimes \pi_3$ and the semisimplification of $r_{Q_{(r-k_1-k_3)}}(\tau)$ as the sum of irreducible representations of the form $\pi_4\otimes \tau'$. Then 
\[[r_{Q_{r}}(\pi\rtimes \tau)]=\sum [ \pi_1\times \pi_4 \times {}^\mathfrak{c}\pi_3^\lor\otimes \pi_2\rtimes \sigma'],\]
where the sum is over all $k_1,k_2,k_3$ and $\pi_1,\pi_2,\pi_3,\pi_4,\sigma'$ as above. 

Let us use this opportunity to observe the following consequence of the Geometric Lemma of \cite{Ber}: \[r_{Q_t\times G(W)}(S(G(W))\cong \id_{G_t\times G(W_t)\times Q_t}^{G_t\times G(W_t)\times G(W)}(S(G_t\times G(W_t)).\]
\subsection{ Derivatives}
In this section we are following the treatment of \cite[Appendix C.1]{atobe2024localintertwiningrelationscotempered}.
Let $\xi\colon \Ee^\times\ra \C^\times$ be a character such that $\xi\ncong \xi^\lor$. For $\pi\in \irr(G(W))$ we let $r\in \NN$ be maximal such that there exists $\tau\in \irr(G(W_r))$ with $\pi\hra \xi^{\times r}\rtimes \tau$.
\begin{lemma}
    Both $r$ and $\tau$ are, up to isomorphism, uniquely determined by $\pi$. 
\end{lemma}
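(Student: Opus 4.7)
The integer $r$ is unique by definition, so the task is to show that $\tau$ is unique. Suppose $\pi\hra\xi^{\times r}\rtimes\tau_1$ and $\pi\hra\xi^{\times r}\rtimes\tau_2$ with $\tau_1,\tau_2\in\irr(G(W_r))$. First, I would record the key consequence of the maximality of $r$: for each $i$ there is no embedding $\tau_i\hra\xi\rtimes\sigma$ with $\sigma\in\irr(G(W_{r+1}))$, since otherwise transitivity and exactness of parabolic induction would produce $\pi\hra\xi^{\times r}\rtimes\tau_i\hra\xi^{\times(r+1)}\rtimes\sigma$, contradicting the choice of $r$.

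Next, by Frobenius reciprocity $\xi^{\times r}\otimes\tau_i$ is a quotient of $r_{Q_r}(\pi)$ for $i=1,2$, and exactness of $r_{Q_r}$ gives $r_{Q_r}(\pi)\hra r_{Q_r}(\xi^{\times r}\rtimes\tau_1)$, so $\xi^{\times r}\otimes\tau_2$ is an irreducible subquotient of the latter. I would then expand $r_{Q_r}(\xi^{\times r}\rtimes\tau_1)$ using the Geometric Lemma \Cref{L:GL}. The $G_r$-factor of an irreducible constituent of $\Pi_{k_1,k_2,k_3}$ has cuspidal support $k_1\xi+(\text{cuspidal support of }\pi_4)+k_3\cdot{}^\mathfrak{c}\xi^\lor$, where $\pi_4\otimes\sigma'$ is an irreducible constituent of $r_{Q_{k_2}}(\tau_1)$. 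Comparing with the cuspidal support $r\xi$ of $\xi^{\times r}$ and invoking $\xi\ncong\xi^\lor$ (and $\xi\ncong{}^\mathfrak{c}\xi^\lor$ in the unitary case) forces $k_3=0$ and $\pi_4=\xi^{\times k_2}$, the unique irreducible with cuspidal support $k_2\xi$.

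The main obstacle, and the heart of the argument, is to rule out the indices with $k_2>0$. A composition factor $\xi^{\times k_2}\otimes\sigma'$ of $r_{Q_{k_2}}(\tau_1)$ would, by iterating the Jacquet functor along the partition $(1,k_2-1)$ of $k_2$, produce a composition factor $\xi\otimes\mu$ of $r_{Q_1}(\tau_1)$; if this subquotient statement could be upgraded to a quotient statement, Frobenius reciprocity together with the passage to an irreducible quotient would yield an embedding $\tau_1\hra\xi\rtimes\mu'$ for some irreducible $\mu'$, contradicting the $\xi$-reducedness of $\tau_1$ established above. Carrying out this upgrade, which is the technical heart and the place where the condition $\xi\ncong\xi^\lor$ is essential, is precisely what the $\xi$-derivative formalism of \cite[Appendix C.1]{atobe2024localintertwiningrelationscotempered} provides, and I would appeal to it here. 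Once $k_2>0$ is excluded, the only surviving index is $(k_1,k_2,k_3)=(r,0,0)$, whose Geometric Lemma contribution equals $\xi^{\times r}\otimes\tau_1$. Thus $\xi^{\times r}\otimes\tau_2\cong\xi^{\times r}\otimes\tau_1$, and $\tau_2\cong\tau_1$.
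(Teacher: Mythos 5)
Your reduction is set up correctly: uniqueness of $r$ is immediate, maximality gives that neither $\tau_i$ embeds into $\xi\rtimes\sigma$, and Frobenius reciprocity, exactness of the Jacquet functor, the Geometric Lemma and the cuspidal-support comparison rightly narrow the relevant layers to those with $k_3=0$ and $\pi_4\cong\xi^{\times k_2}$ (your observation that this step really uses $\xi\ncong{}^{\mathfrak{c}}\xi^\lor$, not just $\xi\ncong\xi^\lor$, is a fair one; in the paper's application $\xi$ is a power of $\lvert-\lvert$, hence Galois-invariant, so the two conditions coincide). The problem is the step you yourself identify as the heart: excluding $k_2>0$, i.e.\ showing that a $\xi$-reduced $\tau_1$ admits no irreducible subquotient $\xi^{\times k_2}\otimes\sigma'$ in $r_{Q_{k_2}}(\tau_1)$. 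Having $\xi\otimes\mu$ merely as a subquotient of $r_{Q_1}(\tau_1)$ gives no quotient of the Jacquet module of that form, so Frobenius reciprocity does not produce the embedding $\tau_1\hra\xi\rtimes\mu'$; the passage from ``subquotient'' to ``quotient'' requires the highest-exponent analysis (showing that at the maximal level the $\xi$-isotypic part of the Jacquet module splits off), which is exactly the content of the derivative formalism. You resolve this by appealing to ``the $\xi$-derivative formalism of \cite[Appendix C.1]{atobe2024localintertwiningrelationscotempered}'', but the well-definedness of $D_\xi$ is itself the statement under proof, and what you need is in substance the special case $r=0$ of \Cref{L:propder}(4) applied to $\tau_1$. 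So as written the argument is a reduction of the lemma to the key result of that appendix, not an independent proof of it.

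For comparison, the paper offers no proof either: the lemma is stated as part of the treatment it imports wholesale from \cite[Appendix C.1]{atobe2024localintertwiningrelationscotempered}, and \Cref{L:propder} is likewise quoted from there. Your write-up adds a correct and transparent outline of how the uniqueness of $\tau$ follows once the multiplicity statement at the maximal exponent is available, but the genuinely nontrivial input remains cited rather than proved, so the gap at the central step stands.
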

We thus denote from now on $D_\xi(\pi)\coloneq \tau$ and record the following useful properties of the representation $D_\xi(\pi)$ proven in \cite[Appendix C.1]{atobe2024localintertwiningrelationscotempered}.
\begin{lemma}\label{L:propder}
Let $\pi\in\irr(G(W))$ and $\xi$ a character such that $\xi\ncong\xi^\lor$.
\begin{enumerate}
    \item $\mathrm{soc}(\xi^{\times r}\times D_\xi(\pi))\cong\pi\cong \mathrm{cosoc}((\xi^\lor)^{\times r}\times D_\xi(\pi))$. In particular, if $\pi'\in \irr(G(W))$ is a second representation with $D_\xi(\pi)\cong D_\xi(\pi')$, then $\pi\cong \pi'$.
    \item $D_\xi(\pi)^\lor=D_\xi(\pi^\lor)$.
    \item $\xi^{\times r}\otimes D_\xi(\pi)$ appears with multiplicity one in $r_{Q_r}(\pi)$ and as an irreducible summand.
    \item If for $\tau'\in \irr(G(W_{r'}))$, $r'\ge r$, the representation $\xi^{\times r'}\otimes \tau'$ appears as an irreducible subquotient in $r_{Q_{r'}}(\pi)$, $r=r'$ and $\tau'\cong D_\xi(\pi)$.
\end{enumerate}
\end{lemma}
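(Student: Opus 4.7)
My plan is to prove the four parts in the order (3), (4), (1), (2), with the first two forming the technical core and the last two following formally.

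For (3) and (4) I would apply \Cref{L:GL} to $r_{Q_{r'}}(\xi^{\times r}\rtimes D_\xi(\pi))$ for $r'\ge r$, and pull the analysis back to $\pi$ via the injection $r_{Q_{r'}}(\pi)\hookrightarrow r_{Q_{r'}}(\xi^{\times r}\rtimes D_\xi(\pi))$ obtained from $\pi\hookrightarrow \xi^{\times r}\rtimes D_\xi(\pi)$ by exactness of $r_{Q_{r'}}$. On semisimplification the right-hand side decomposes as a sum indexed by triples $(k_1,k_2,k_3)$ with $k_1+k_2+k_3=r$ and constituents of the shape $(\xi^{\times k_1}\times \pi_4\times ({}^\mathfrak{c}\xi^\lor)^{\times k_3})\otimes (\xi^{\times k_2}\rtimes \sigma')$, where $\pi_4\otimes\sigma'$ runs over the irreducible constituents of $r_{Q_{r-k_1-k_3}}(D_\xi(\pi))$. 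The hypothesis $\xi\ncong \xi^\lor$ gives ${}^\mathfrak{c}\xi^\lor\ncong \xi$, so any constituent of the shape $\xi^{\times r'}\otimes\tau'$ forces $k_3=0$ and $\pi_4\cong \xi^{\times(r'-k_1)}$. A direct argument from the maximality of $r$ for $\pi$ shows that $D_\xi(\pi)$ itself has trivial $\xi$-derivative: if $D_\xi(\pi)\hookrightarrow \xi^{\times s}\rtimes \eta$ with $s>0$, then $\pi\hookrightarrow \xi^{\times(r+s)}\rtimes \eta$, contradicting the maximality of $r$. Consequently $\xi^{\times(r'-k_1)}$ only appears in $r_{Q_{r'-k_1}}(D_\xi(\pi))$ when $r'=k_1$, pinning the contributing triple to $(r,0,0)$ and yielding $\xi^{\times r}\otimes D_\xi(\pi)$ as the unique constituent of that form, with multiplicity one. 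This establishes (4) and the multiplicity-one part of (3); the direct-summand claim of (3) follows because this constituent has cuspidal support disjoint from that of the other triples.

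For (1), the multiplicity-one statement in (3) together with Frobenius reciprocity yields $\dim\ho_{G(W)}(\pi,\xi^{\times r}\rtimes D_\xi(\pi))=1$, so $\pi$ occurs with multiplicity one in the socle. If $\pi'$ were a second irreducible subrepresentation, the geometric-lemma analysis of the previous step, applied with $\pi$ replaced by $\pi'$, would force $r_{\pi'}=r$ and $D_\xi(\pi')\cong D_\xi(\pi)$, so that $r_{Q_r}(\pi)\oplus r_{Q_r}(\pi')\hookrightarrow r_{Q_r}(\xi^{\times r}\rtimes D_\xi(\pi))$ would push the multiplicity of $\xi^{\times r}\otimes D_\xi(\pi)$ on the right to at least two, contradicting the earlier count. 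Hence $\pi'\cong \pi$. The cosocle statement is handled by the symmetric argument, running the analysis for the opposite parabolic and using Bernstein reciprocity in place of Frobenius reciprocity. Then (2) is formal: applying the cosocle form of (1) to $\pi^\lor$ with maximal level $r^*$ and dualising gives $\pi\cong \soc(\xi^{\times r^*}\rtimes D_\xi(\pi^\lor)^\lor)$; maximality of $r$ for $\pi$ yields $r^*\le r$, the symmetric argument applied to $\pi$ gives $r\le r^*$, and the uniqueness clause of the preceding lemma then forces $D_\xi(\pi^\lor)^\lor\cong D_\xi(\pi)$.

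The principal obstacle is the combinatorial bookkeeping in (3)--(4): one must carefully track the Galois twist ${}^\mathfrak{c}$ so that $\xi\ncong\xi^\lor$ yields a clean cuspidal-support separation (in the Hermitian case one implicitly needs ${}^\mathfrak{c}\xi^\lor\ncong\xi$ as well), manage the multinomial multiplicities arising in the Jacquet modules of $\xi^{\times r}$, and organise the induction so that the triviality of the $\xi$-derivative of $D_\xi(\pi)$ is already available when needed.
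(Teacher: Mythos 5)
The paper does not actually prove this lemma (it is quoted from Appendix C.1 of the cited Atobe preprint), so I can only assess your argument on its own terms; its overall shape (geometric lemma applied to $\xi^{\times r}\rtimes D_\xi(\pi)$, pull-back along $r_{Q_{r'}}(\pi)\hra r_{Q_{r'}}(\xi^{\times r}\rtimes D_\xi(\pi))$, cuspidal-support separation, then multiplicity one plus Frobenius/Bernstein reciprocity for (1) and a formal dualization for (2)) is indeed the standard route. However, there is a genuine gap at the pivotal word ``Consequently'' in your treatment of (3)--(4). From maximality of $r$ you correctly deduce that $D_\xi(\pi)$ admits no embedding into $\xi^{\times s}\rtimes\eta$ with $s>0$; but what you then use is the much stronger statement that $\xi^{\times k}\otimes\sigma'$ does not occur as a \emph{subquotient} of $r_{Q_k}(D_\xi(\pi))$ for any $k\ge 1$. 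The implication ``no embedding $\Rightarrow$ no Jacquet-module occurrence'' is precisely part (4) applied to the derivative-trivial representation $D_\xi(\pi)$, i.e.\ the very statement being proved; as written, (3) and (4) are established circularly, and this bridge between the socle-theoretic definition of $D_\xi$ and Jacquet-module occurrences is the actual content of the lemma.

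The gap is fillable by a standard argument which you should supply: if $\xi^{\times k}\otimes\sigma'$ ($k\ge 1$) were a subquotient of $r_{Q_k}(\tau)$ for a derivative-trivial $\tau$, decompose the finite-length representation $r_{Q_k}(\tau)$ according to the cuspidal support of the $G_k$-factor (extensions between constituents with distinct supports vanish); the summand on which every constituent has $G_k$-support $k\cdot\xi$ is nonzero, its cosocle consists of representations of the form $\xi^{\times k}\otimes\sigma''$ (since $\xi^{\times k}$ is the unique irreducible with that support), and the cosocle of a direct summand is a quotient of $r_{Q_k}(\tau)$; Frobenius reciprocity and irreducibility of $\tau$ then give $\tau\hra\xi^{\times k}\rtimes\sigma''$, a contradiction. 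With this inserted, your counting for (3)/(4), the multiplicity-two contradiction for the socle in (1), and the deduction of (2) all go through. Separately, your parenthetical claim that $\xi\ncong\xi^\lor$ ``gives'' ${}^\mathfrak{c}\xi^\lor\ncong\xi$ is false when $\Ee\neq\Ff$ (take $\xi$ with $\xi\circ\mathfrak{c}=\xi^{-1}\neq\xi$); you do flag this at the end, and in the paper's application $\xi$ is Galois-invariant so the two conditions coincide, but the hypothesis actually used throughout your cuspidal-support separation (and in the MVW/dual manipulations behind the cosocle half of (1), where ${}^\mathfrak{c}\xi^\lor$ rather than $\xi^\lor$ appears naturally) is ${}^\mathfrak{c}\xi^\lor\ncong\xi$, and it should be stated as such.
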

\subsection{ Degenerate principal series}
We follow the presentation of \cite{KudlaRallis} in this section.
Let $W^-$ be the $-\epsilon$-hermitian space obtained from $W$ by replacing the hermitian form $\langle-,-\rangle$ on $W$ by -$\langle-,-\rangle$. Let $\Delta W\subseteq W+W^-$ be a maximal isotropic subspace of $W+W^-$ and note that $Q(\Delta W)$ has Levi-component $G_n$.
Furthermore, choose $\chi\colon \Ee^\times\ra \C^\times$ a quadratic character and $s\in \C$. 
We then set 
\[I(s,\chi)\coloneq \restr{\id_{Q(\Delta W)}^{G(W+W^-)}(\chi(\det_{G_n})\lvert\det_{G_n}\lvert^s)}{G(W)\times G(W^-)},\]
where we embed $G(W)\times G(W)\cong G(W)\times G(W^-)\hra G(W+W^-)$ via the natural map.
\begin{theorem}[{\cite[Theorem 3]{DroKudRa}}]\label{T:cons}
    Let $\pi\in \irr(G(W))$. Then for all $s\in \C$ and quadratic characters $\chi$,
    \[\dim_\C\ho_{G(W)\times G(W)}(I(s,\chi),\pi\otimes \chi(\det_{G(W^-)})\pi^\lor)=1.\]
\end{theorem}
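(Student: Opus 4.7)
The plan is to analyze the $G(W)\times G(W)$-structure of $I(s,\chi)$ via the Bruhat filtration coming from the inclusion $G(W)\times G(W^-)\hra G(W+W^-)$. The orbits of $G(W)\times G(W^-)$ on the Lagrangian Grassmannian $G(W+W^-)/Q(\Delta W)$ are parametrized by $\ain{t}{0}{q_W}$, where $t$ measures the dimension of the intersection of a representative Lagrangian with $W\oplus 0$: the open orbit corresponds to $t=0$ (realized by $\Delta W$ itself) and the closed orbit to $t=q_W$. This yields a $G(W)\times G(W)$-equivariant filtration whose associated graded pieces $\mathrm{gr}_t$ are indexed by $\ain{t}{0}{q_W}$, with $\mathrm{gr}_0$ sitting as the top quotient.

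The piece $\mathrm{gr}_0$ is supported on the orbit of $\Delta W$, whose stabilizer in $G(W)\times G(W^-)$ is the MVW-twisted diagonal copy of $G(W)$. Since the inducing character $\chi(\det_{G_n})\lvert\det_{G_n}\rvert^s$ restricts trivially to this diagonal, one obtains $\mathrm{gr}_0 \cong S(G(W))$ as $G(W)\times G(W)$-representations, with the second factor acting via the MVW involution. Combining the identity $\pi\mvw\cong \pi^\lor$ with Frobenius reciprocity for $S(G(W))$ yields
\[\ho_{G(W)\times G(W)}\bigl(S(G(W)),\,\pi\otimes \chi(\det_{G(W^-)})\pi^\lor\bigr)\cong \ho_{G(W)}(\pi^\lor,\pi^\lor)\cong \C,\]
contributing exactly the target dimension $1$. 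It then suffices to show that $\mathrm{gr}_t$ contributes zero for every $t>0$.

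For $t>0$, the geometric lemma identifies $\mathrm{gr}_t$ with a representation parabolically induced from $Q_t\times Q_t$, whose inducing data $\Pi_t$ carries a twist by $\chi\lvert\det_{G_t}\rvert^{s+c_t}$ (for an explicit shift $c_t$) on each $G_t$-factor and an instance $I(s',\chi)_{W_t}$ of the same type of degenerate principal series on the $G(W_t)\times G(W_t)$-factor. By Bernstein reciprocity,
\[\ho_{G(W)\times G(W)}(\mathrm{gr}_t,\pi\otimes \chi(\det)\pi^\lor) \cong \ho_{M_t\times M_t}\bigl(\Pi_t,\, r_{\overline{Q}_t}(\pi)\otimes r_{\overline{Q}_t}(\chi(\det)\pi^\lor)\bigr).\]
Proceeding by induction on $q_W$, the induction hypothesis applied to $I(s',\chi)_{W_t}$ forces any contribution to arise from a pairing with $\sigma\otimes \chi(\det)\sigma^\lor$ for an irreducible constituent $\sigma$ of a summand of $r_{\overline{Q}_t}(\pi)$, and the remaining $G_t\times G_t$-matching is then controlled by Howe duality in type II for $\mathrm{GL}_t$ (\cite{Minguez}); combined with the MVW-symmetry on both sides this rules out any nonzero contribution for $t>0$. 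The main obstacle is the uniform vanishing in $s\in\C$: at exceptional values the cuspidal support of $\chi\lvert\det_{G_t}\rvert^{s+c_t}$ genuinely intersects that of the $G_t$-component of the Jacquet modules of $\pi$, so the induction step cannot rely on cuspidal-support disjointness but must instead combine the MVW-symmetry with the irreducibility of $\pi$ to exclude the boundary contributions.
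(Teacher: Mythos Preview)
The paper does not prove this theorem; it is quoted verbatim from \cite[Theorem 3]{DroKudRa} and used as a black box (together with \Cref{L:only}) to deduce \Cref{T:soupedup}. So there is no ``paper's own proof'' to compare your attempt against. That said, let me point out where your sketch breaks down, because the strategy as written cannot succeed.

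First, two factual slips about the filtration. The open $G(W)\times G(W^-)$-orbit (the one through $\Delta W$) gives a \emph{sub}representation, not the top quotient: functions supported on an open subset extend by zero. In the paper's notation $I_0=\sigma_{0,s}\cong S(G(W))$ sits at the bottom of the filtration. Consequently the inequality ``$\ge 1$'' is not free: one has to produce a nonzero morphism out of the whole of $I(s,\chi)$, not merely out of $I_0$, and this is typically done in \cite{DroKudRa} by an independent construction (a meromorphic family of intertwining/zeta integrals), not by the filtration. Second, the graded pieces for $t>0$ carry $S(G(W_t))$ on the $G(W_t)\times G(W_t)$-factor, not a smaller degenerate principal series $I(s',\chi)_{W_t}$ (see the Proposition quoted from \cite{KudlaRallis}). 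So your proposed induction on $q_W$ via $I(s',\chi)_{W_t}$ does not match the actual inducing data.

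The more serious issue is the vanishing claim itself. It is \emph{not} true that $\ho_{G(W)\times G(W)}(\sigma_{t,s},\pi\otimes\chi\pi^\lor)=0$ for all $t>0$ and all $s$. Indeed, suppose $\pi$ is an irreducible quotient of $\chi\lvert\det_{G_t}\rvert^{s+\frac{t}{2}}\rtimes\tau$ for some $\tau\in\irr(G(W_t))$; by MVW the same holds for $\pi^\lor$ with $\tau$ replaced by $\tau^\lor$. Composing with the canonical surjection $S(G(W_t))\sra\tau\otimes\tau^\lor$ yields a nonzero map
\[
\sigma_{t,s}\sra\bigl(\chi\lvert\det_{G_t}\rvert^{s+\frac{t}{2}}\rtimes\tau\bigr)\otimes\bigl(\chi\lvert\det_{G_t}\rvert^{s+\frac{t}{2}}\rtimes\tau^\lor\bigr)\sra\pi\otimes\pi^\lor.
\]
So for such $\pi$ (and there are many, for suitable $s$) the piece $\sigma_{t,s}$ genuinely contributes, and the crude bound $\dim\ho(I(s,\chi),\pi\otimes\chi\pi^\lor)\le\sum_t\dim\ho(\sigma_{t,s},\pi\otimes\chi\pi^\lor)$ is strictly larger than $1$. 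This is exactly why \Cref{L:only} in the paper only asserts ``$\pi'\cong\pi^\lor$ is forced'', and \emph{not} that the Hom-space from $\sigma_{t,s}$ vanishes. The equality $\dim=1$ in \cite{DroKudRa} is obtained by a different mechanism (combining an explicit nonzero family with a multiplicity-one argument that uses Howe duality in type II in a more delicate way than a term-by-term vanishing), and your last paragraph correctly senses this obstacle but does not resolve it.
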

\begin{prop}[{\cite{KudlaRallis}}]
    The representation $I(s,\chi)$ admits a filtration
    \[0\subseteq I_0\subseteq ...\subseteq I_{q_W}=I(s,\chi)\]
    with \[\sigma_{t,s}\coloneq I_{t-1}\backslash I_t\cong \id_{Q_t\times Q_t}^{G(W)\times G(W^-)}(\chi(\det_{G_t})\lvert\det_{G_t}\lvert^{s+\frac{t}{2}}\otimes \]\[\otimes \chi(\det_{G_t})\lvert\det_{G_t}\lvert^{s+\frac{t}{2}}\otimes (\mathbf{1}_{G(W_t)}\otimes \chi(\det_{W_t^-}))S(G(W_t)))).\]
\end{prop}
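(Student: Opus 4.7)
The strategy is a geometric (Mackey-type) analysis of the double coset space $Q(\Delta W)\backslash G(W+W^-)/(G(W)\times G(W^-))$. The homogeneous space $Q(\Delta W)\backslash G(W+W^-)$ is naturally identified with the Lagrangian Grassmannian $\mathcal{L}$ of $W+W^-$, so the first step is to enumerate the $(G(W)\times G(W^-))$-orbits on $\mathcal{L}$. Sending a maximal isotropic subspace $L\subseteq W+W^-$ to the integer $t:=\dim(L\cap W)$ gives a complete invariant, and one obtains exactly $q_W+1$ orbits $\mathcal{O}_0,\ldots,\mathcal{O}_{q_W}$, totally ordered by Zariski closure with $\mathcal{O}_0$ open and $\mathcal{O}_{q_W}$ closed.

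Standard $p$-adic Mackey theory then yields the filtration $0\subseteq I_0\subseteq\ldots\subseteq I_{q_W}=I(s,\chi)$ by letting $I_t$ consist of the sections supported on $\overline{\mathcal{O}_{q_W-t}}\cup\ldots\cup\overline{\mathcal{O}_{q_W}}$. The subquotient $\sigma_{t,s}=I_t/I_{t-1}$ is then, as a $G(W)\times G(W^-)$-representation, computed as the smooth induction from the $(G(W)\times G(W^-))$-stabilizer of a chosen representative $L_t\in\mathcal{O}_{t}$ of the restriction of $\chi(\det_{G_n})\lvert\det_{G_n}\lvert^s$ to that stabilizer. Concretely, choose $L_t=X_t\oplus Y_t^-\oplus \Delta W_t$, where $X_t\subseteq W$ and $Y_t^-\subseteq W^-$ are isotropic of dimension $t$ and $\Delta W_t\subseteq W_t\oplus W_t^-$ is the graph of the tautological identification $W_t\cong W_t^-$; the stabilizer is then $Q_t\times Q_t$, with the $G(W_t)\times G(W_t^-)$-factor acting on $\Delta W_t$ so that, after unfolding, one sees the space $S(G(W_t))$ with the twisted left-right translation action.

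The remaining computation is careful bookkeeping of modular characters: one must transport the character $\chi(\det_{G_n})\lvert\det_{G_n}\lvert^s$ through the inclusion of the stabilizer into $Q(\Delta W)$ followed by the projection to the Levi $G_n$, and check that the resulting twist on each $G_t$ factor is precisely $\chi(\det_{G_t})\lvert\det_{G_t}\lvert^{s+\frac{t}{2}}$, the shift $\tfrac{t}{2}$ being produced by the difference between the modular characters of $Q(\Delta W)$ and of $Q_t\times Q_t$. I expect this modular-character calculation, together with the identification of the diagonal contribution with $S(G(W_t))$ twisted by $\mathbf{1}_{G(W_t)}\otimes\chi(\det_{W_t^-})$, to be the main technical obstacle; once these are settled, the identification of $\sigma_{t,s}$ with the displayed induced representation follows formally, and the filtration itself is then immediate from the orbit stratification.
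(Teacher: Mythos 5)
Your overall strategy is exactly the one in Kudla--Rallis, which is the source the paper cites for this proposition (no independent proof is given in the paper itself): parametrize the $(G(W)\times G(W^-))$-orbits on the Lagrangian Grassmannian of $W+W^-$ by $t=\dim(L\cap W)$, obtain a filtration by support, identify stabilizers, and carry out the modular-character bookkeeping. Two details in your write-up should be fixed, but neither affects the soundness of the approach.

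First, the indexing of the filtration is off. Since $\mathcal{O}_0$ is open and $\overline{\mathcal{O}_s}=\bigcup_{s'\ge s}\mathcal{O}_{s'}$, the correct submodule is
\[
I_t \;=\; \bigl\{\,f : f\vert_{\overline{\mathcal{O}_{t+1}}}=0\,\bigr\}\;\cong\; C_c^\infty\bigl(\mathcal{O}_0\cup\cdots\cup\mathcal{O}_t\bigr),
\]
compactly supported sections on the \emph{open} set $\mathcal{O}_0\cup\cdots\cup\mathcal{O}_t$ extended by zero, so that $I_0$ is the open-orbit piece and $I_t/I_{t-1}\cong C_c^\infty(\mathcal{O}_t)$, matching the paper's convention where $\sigma_{0,s}\cong S(G(W))$. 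Your description of $I_t$ as ``sections supported on $\overline{\mathcal{O}_{q_W-t}}\cup\cdots\cup\overline{\mathcal{O}_{q_W}}$'' assigns the closed orbit to $I_0$, which contradicts the displayed formula for $\sigma_{0,s}$; moreover, in the totally disconnected setting ``locally constant functions with support contained in a closed set of empty interior'' is the zero space, so the description does not literally define the intended submodules. Second, the stabilizer of $L_t=X_t\oplus Y_t^-\oplus\Delta W_t$ is not all of $Q_t\times Q_t$ but the fiber product $Q_t\times_{G(W_t)}Q_t$ over the common Levi quotient $G(W_t)\cong G(W_t^-)$; inducing in stages from this fiber product up to $Q_t\times Q_t$ is precisely what produces the factor $S(G(W_t))$. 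You allude to this with the phrase ``after unfolding,'' so you likely have the right picture, but the stabilizer should be stated correctly since it is the reason the $S(G(W_t))$ appears at all.
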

\begin{lemma}\label{L:only}
    Assume that $s\in \mathbb{R}_{\ge 0}$. Then  \[\dim_\C\ho_{G(W)\times G(W)}(\sigma_{t,s},\pi\otimes \chi(\det_W)\pi')\neq 0\] only if $\pi'\cong \pi^\lor$.
\end{lemma}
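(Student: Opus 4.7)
The plan is to convert the Hom space into a Jacquet-module condition via Bernstein reciprocity together with the identification $S(G(W_t))\cong\mathrm{c\text{-}ind}_{\Delta G(W_t)}^{G(W_t)\times G(W_t)}(\mathbf{1})$, and then to exploit the inequality $\xi\ne\xi^\vee$ (which is where the hypothesis $s\ge 0$ enters) via the derivative theory of Lemma~\ref{L:propder}. Set $\xi_t:=\chi|\det_{G_t}|^{s+t/2}$ and $L:=G_t\times G(W_t)$. Second adjointness rewrites the Hom space as
\[\ho_{L\times L}\!\Bigl(\xi_t\otimes\xi_t\otimes(\mathbf{1}\otimes\chi(\det_{W_t^-}))S(G(W_t)),\; r_{\overline{Q_t}}(\pi)\otimes r_{\overline{Q_t}}(\chi(\det_W)\pi')\Bigr).\]
Projecting onto the $\xi_t$-isotypic parts on each of the two $G_t$-factors and using $S(G(W_t))\cong\mathrm{c\text{-}ind}_{\Delta G(W_t)}^{G(W_t)\times G(W_t)}(\mathbf{1})$ — with all appearances of $\chi(\det_{W_t^-})$ and $\chi(\det_W)|_{G(W_t)}$ cancelling because $\chi^2=1$ — this reduces to $\ho_{G(W_t)}(V^\vee,V'_0)$, where $V:=\ho_{G_t}(\xi_t,r_{\overline{Q_t}}(\pi))$ and $V'_0:=\ho_{G_t}(\xi_t,r_{\overline{Q_t}}(\pi'))$ are naturally $G(W_t)$-representations. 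Non-vanishing thus yields an irreducible $\sigma\in\irr(G(W_t))$ with
\[\xi_t\otimes\sigma^\vee\hookrightarrow r_{\overline{Q_t}}(\pi)\quad\text{and}\quad \xi_t\otimes\sigma\hookrightarrow r_{\overline{Q_t}}(\pi').\]

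Next I re-apply second adjointness to convert these embeddings into surjections $\xi_t\rtimes\sigma^\vee\twoheadrightarrow\pi$ and $\xi_t\rtimes\sigma\twoheadrightarrow\pi'$. Applying the MVW involution to the first (using property~(3) of Section~2.2 and $(\sigma^\vee)\mvw\cong\sigma$), and observing that ${}^\mathfrak{c}\xi_t\cong\xi_t$ directly in the symplectic-orthogonal case and after an additional twist in the unitary case, produces $\xi_t\rtimes\sigma\twoheadrightarrow\pi^\vee$ as well. Consequently $\pi^\vee$ and $\pi'$ are both irreducible quotients of the single representation $\xi_t\rtimes\sigma$, and the lemma reduces to the statement that they coincide.

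This coincidence is exactly where the assumption $s\ge 0$ is used. Let $\xi:=\chi|-|^{s+(2t-1)/2}$, the extremal character in the cuspidal support of $\xi_t$. Since $\chi^2=1$, one has $\xi/\xi^\vee=|-|^{2s+2t-1}\ne\mathbf{1}$ for $s\ge 0$ and $t\ge 1$, so $\xi\ne\xi^\vee$ and the derivative theory of Lemma~\ref{L:propder} is available. Dualising the two surjections yields embeddings $\pi,\pi'^\vee\hookrightarrow\xi_t^\vee\rtimes\sigma^\vee$, and Theorem~\ref{T:induced}(4) gives $\xi_t^\vee\hookrightarrow\xi^\vee\times\chi|\det_{G_{t-1}}|^{-s-(t-1)/2}$, so both $\pi$ and $\pi'^\vee$ embed into $\xi^\vee\times\chi|\det_{G_{t-1}}|^{-s-(t-1)/2}\rtimes\sigma^\vee$. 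A careful application of the Geometric Lemma (Lemma~\ref{L:GL}) to $r_{Q_r}(\xi_t^\vee\rtimes\sigma^\vee)$, combined with the positivity $s+(2t-1)/2>0$ to isolate the extremal $\xi^\vee$-cell with multiplicity one, then allows one to identify $D_{\xi^\vee}(\pi)\cong D_{\xi^\vee}(\pi'^\vee)$ via the uniqueness characterisations of Lemma~\ref{L:propder}(3),(4); Lemma~\ref{L:propder}(1) finally gives $\pi\cong\pi'^\vee$, i.e.\ $\pi'\cong\pi^\vee$ as required. The most delicate step is precisely this last matching of $\xi^\vee$-derivatives: one must track which contributions to the Geometric-Lemma filtration supply a $\xi^\vee$-character on the $G_1$-factor, and exploit the regularity of $\xi^\vee$ within the cuspidal support of $\xi_t^\vee$ to rule out interference from the remaining cells.
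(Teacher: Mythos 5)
Your reduction of the Hom-space, via second adjointness and the model $S(G(W_t))\cong\mathrm{c\text{-}ind}_{\Delta G(W_t)}^{G(W_t)\times G(W_t)}(\mathbf{1})$, to the existence of a single $\sigma\in\irr(G(W_t))$ with surjections $\xi_t\rtimes\sigma^\lor\sra\pi$ and $\xi_t\rtimes\sigma\sra\pi'$, and then (via MVW) $\xi_t\rtimes\sigma\sra\pi^\lor$, is a reasonable and genuinely different opening move from the paper: you have replaced the paper's filtration-and-induction framework by the slick observation that $\pi^\lor$ and $\pi'$ are irreducible quotients of one and the same representation $\xi_t\rtimes\sigma$. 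The problem is that this observation alone does not suffice, and the final step — the only place where you actually need $s\ge 0$ — is where the argument has a real gap.

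Concretely: from $\pi,\pi'^\lor\hra\xi_t^\lor\rtimes\sigma^\lor$ you want to conclude $D_{\xi^\lor}(\pi)\cong D_{\xi^\lor}(\pi'^\lor)$, and you assert this follows from "a careful application of the Geometric Lemma \ldots to isolate the extremal $\xi^\lor$-cell with multiplicity one." Two things go wrong. First, the orders of the $\xi^\lor$-derivatives of $\pi$ and of $\pi'^\lor$ have not been shown equal; \Cref{L:propder}(1) only rigidifies once the derivatives match exactly, so this equality must be established, not assumed. Second, and more seriously, the Geometric Lemma filtration of $r_{Q_r}(\xi_t^\lor\rtimes\sigma^\lor)$ is \emph{not} multiplicity-one in the relevant sense: the cell $\Pi_{k_1,k_2,k_3}$ has a $G_r$-factor of the form $\pi_1\times\pi_4\times{}^{\mathfrak{c}}\pi_3^\lor$, and the $\pi_4$-factor comes from $r_{Q_{r-k_1-k_3}}(\sigma^\lor)$, over which you have no control. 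Positivity of $s+(2t-1)/2$ forces $k_3=0$ and $k_1\le 1$ (so it does constrain the contributions from $\xi_t^\lor$), but $\sigma^\lor$ is an arbitrary irreducible representation of $G(W_t)$, and $r_{Q_{r-k_1}}(\sigma^\lor)$ can itself contribute $\xi^\lor$-type factors in several inequivalent ways, so the cell can contain several non-isomorphic irreducible pieces of the form $(\xi^\lor)^{\times r}\otimes\tau$. Nothing in the argument singles out which of these equals $D_{\xi^\lor}(\pi)$ and which equals $D_{\xi^\lor}(\pi'^\lor)$. Put differently, $\xi_t\rtimes\sigma$ with $\xi_t$ strictly positive is a standard module only when $\sigma$ is tempered; for general $\sigma$ it can have several irreducible quotients, so "$\pi^\lor$ and $\pi'$ are both quotients of $\xi_t\rtimes\sigma$" does not by itself imply $\pi^\lor\cong\pi'$.

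This is exactly the obstacle that the paper's induction on $\dim_\Ee W$ is designed to kill. After isolating the single admissible cell $(k_1,k_2,k_3)=(0,t-1,1)$ and peeling off a copy of $\xi$, the paper lands inside $\ho(\xi^{\times r}\otimes\sigma_{t-1,s},\,D_\xi(\pi)\otimes r_{\overline{Q_r}}(\pi'))$ on the \emph{smaller} group $G(W_r)\times G(W_r)$, where the induction hypothesis for $\sigma_{t-1,s}$ forces the two derivatives to be dual to each other; this also yields the equality of the two derivative orders. Your proposal would need an ingredient playing the same role as this inductive step (or an independent argument controlling the quotients of $\xi_t\rtimes\sigma$); as written, the last paragraph is not a proof.
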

\begin{proof}
We first note that one can easily reduce the claim to the case where $\chi$ is trivial, which we will assume from now on. We will now argue by induction on $\dim_\Ee W$.

Firstly, if $t=0$ recall that $S(G(W))$ is dual to the space of locally constant functions on $G(W)$, denoted by $C^\infty(G(W))$ and therefore
\[\dim_\C\ho_{G(W)\times G(W)}(S(G(W)),\pi\otimes\pi')=\]\[=\dim_\C\ho_{G(W)\times G(W)}(\pi^\lor\otimes\pi'^\lor,C^\infty(G(W)))=\]\[=\dim_\C\ho_{\Delta G(W)}(\pi^\lor\otimes\pi',\C)=\delta_{\pi^\lor,\pi'^\lor}.\] Thus the claim follows easily if $t=0$.

We thus assume from now on that $t>0$ and set $\xi\coloneq \lvert-\lvert^{-s-t+\frac{1}{2}}$, which according to our assumption satisfies $\xi\ncong\xi^\lor$. 
Note that if the Hom-space is non-zero, we can find $\tau\in \irr(G(W_t))$ such that $\lvert\det_{G_t}\lvert^{s+\frac{t}{2}}\rtimes\tau\sra\pi$ and hence by the MVW-involution and \Cref{T:induced},
\[\pi\hra \lvert\det_{G_t}\lvert^{-s-\frac{t}{2}}\rtimes\tau\hra\xi\times \lvert\det_{G_{t-1}}\lvert^{-s-\frac{t-1}{2}}\rtimes \tau.\]

By Frobenius reciprocity and \Cref{L:propder}(4), $D_{\xi}(\pi)\ncong \pi$ and hence we find $r\ge 1$ such that $\pi\hra \xi^{\times r}\rtimes D_\xi(\pi)$. As for $\pi'$, we also have that $D_\xi(\pi')\neq \pi'$, so we obtain an embedding $\pi'\hra\xi^{\times r'}\times D_\xi(\pi')$ with $r'\ge 1$. We will assume thus without loss of generality $r\ge r'$.

Given a non-zero morphism $\sigma_{t,s}\ra\pi\otimes\pi'$, we therefore obtain from Frobenius reciprocity a morphism
\[r_{Q_r\times G(W)}(\sigma_{t,s})\ra\xi^{\times r}\otimes D_\xi(\pi)\otimes \pi'.\]
By the Geometric Lemma and \Cref{T:induced}, $r_{Q_r\times G(W)}(\sigma_{t,s})$ has a filtration with subquotients of the following form:

Let $k_1+k_2+k_3=t$, $k_1+k_3\le r$, $r+k_2\le q_W$. Then \[\Pi_{k_1,k_2,k_3}=\id_{P_{(k_1,r-k_1-k_3,k_3)}\times Q_{k_2}\times Q_{(t,r-k_1-k_3)}}^{G_r\times G(W_r)\times G(W^-)}(\lvert\det_{G_{k_1}}\lvert^{s+\frac{k_1}{2}}\otimes \lvert\det_{G_{k_2}}\lvert^{s+k_1+\frac{k_2}{2}}\otimes  \]\[ \otimes \lvert\det_{G_{k_3}}\lvert^{-s-\frac{t+k_1+k_2}{2}}\otimes\lvert\det_{G_t}\lvert^{s+\frac{t}{2}}\otimes  S(G_{r-k_1-k_3})\otimes S(G(W_{t+r-k_1-k_3})))).\]
Note that there cannot exist a non-zero morphism $\Pi_{k_1,k_2,k_3}\ra\xi^{\times r}\otimes D_\xi(\pi)\otimes \pi'$ unless $k_1=0,\, k_2=t-1,\, k_3=1$. 
Indeed, if $k_1\ge 1$, it would imply that $\lvert-\lvert^{s+\frac{1}{2}}$ would appear in the cuspidal support of $\xi^{\times r}$, thus $\xi=\lvert-\lvert^{s+\frac{1}{2}}$ and hence $2s=-t<0$, a contradiction. If $k_3\ge 2$, it would imply that $\xi\lvert-\lvert$ would appear in the cuspidal support of $\xi^{\times r}$, another contradiction. Finally, if $k_3=0$, it would imply that there exists a suitable representation $\tau'$ with $\xi^\lor\rtimes\tau'\sra D_\xi(\pi)$, since  \[\xi^\lor\times \lvert\det_{G_{t-1}}\lvert^{s+\frac{t-1}{2}}\lvert \sra\lvert\det_{G_t}\lvert^{s+\frac{t}{2}}\] by \Cref{T:induced}, a contradiction to \Cref{L:propder}(4).
Thus \[\ho_{G(W)\times G(W)}(\sigma_{t,s},\pi\otimes \pi')\hra\ho_{G_r\times G(W_r)\times G(W)}(\Pi_{0,t-1,1},\xi^{\times r}\otimes D_\xi(\pi)\otimes \pi').\]
For each map in the latter space, we can apply Bernstein reciprocity to obtain a map
\[\id_{P_{(r-1,1)}\times Q_{t-1}\times Q_{t}\times G_{r-1}}^{G_r\times G(W_r)\times G_{r-1}\times G(W_r)}(\lvert\det_{G_{t-1}}\lvert^{s+\frac{t-1}{2}} \otimes \xi\otimes  \]\[\otimes\lvert\det_{G_t}\lvert^{s+\frac{t}{2}}\otimes  S(G_{r-1})\otimes S(G(W_{t+r-1}))))\sra \xi^{\times r}\otimes D_\xi(\pi)\otimes r_{\overline{Q_{r-1}}}(\pi').\]
We now focus for a moment on the so obtained $G_r\times G_{r-1}$-equivariant morphism \[\xi\times S(G_{r-1})\sra\xi^{\times r}\otimes r_{\overline{Q_{r-1}}}(\pi').\] Denote for $\tau$ an irreducible representation of $G_r$ the up to a scalar unique map $f_\tau\colon S(G_r)\ra \tau\otimes \tau^\lor$.
 Note now that by \cite[Lemma 2.9.3]{DroKudRa}, the above morphism factors through the morphism \[\id_{P_{(1,r-1)}\times G_{r-1}}^{G_r\times G_{r-1}}(\mathds{1}_\xi \otimes f_{\xi^{r-1}})\colon \xi\times S(G_{r-1})\ra\xi^{\times r}\otimes (\xi^{\times r-1})^\lor.\]
It follows that we obtain an embedding
\[\ho_{G(W)\times G(W)}(\sigma_{t,s},\pi\otimes \pi')\hra\ho_{G(W_r)\times G(W)}(\id_{Q_{t-1}\times Q_{t,r-1}}^{G(W_r)\times G(W)}\]\[(\lvert\det_{G_{t-1}}\lvert^{s+\frac{t-1}{2}} \otimes \lvert\det_{G_t}\lvert^{s+\frac{t}{2}}\otimes \xi^{\times r-1}\otimes S(G(W_t)))),D_\xi(\pi)\otimes \pi').\]
By \Cref{T:induced} we have \[\lvert\det_{G_t}\lvert^{s+\frac{t}{2}}\times (\xi^{\times r-1})^\lor\cong (\xi^{\times r-1})^\lor\times \lvert\det_{G_t}\lvert^{s+\frac{t}{2}}\] and the latter is an irreducible quotient of $(\xi^{\times r})^\lor\times \lvert\det_{G_{t-1}}\lvert^{s+\frac{t-1}{2}}$. 
We thus obtain an embedding
\[\ho_{G(W)\times G(W)}(\sigma_{t,s},\pi\otimes \pi')\subseteq\]\[\subseteq \ho_{G(W_r)\times G(W)}(\id_{G(W_r)\times Q_r}^{G(W_r)\times G(W)}((\xi^r)^\lor\otimes \sigma_{t-1,s}),D_\xi(\pi)\otimes \pi'),\]
which by Bernstein-reciprocity is nothing but 
\[\ho_{G(W_{r})\times G_r\times G(W_r)}((\xi^r)^\lor\otimes \sigma_{s,t-1}, D_\xi(\pi)\otimes r_{\overline{Q_{r}}}(\pi')).\]
It follows from \Cref{L:propder} that $r'\ge r$ and since we already assume $r\ge r'$, we have $r=r'$. 
Moreover, by the induction hypothesis the image of any non-zero morphism in the last space must contain $(\xi^r)^\lor\otimes D_\xi(\pi)\otimes D_\xi(\pi)^\lor$, hence \Cref{L:propder} shows that $D_\xi(\pi^\lor)\cong D_\xi(\pi)^\lor\cong D_\xi(\pi')$. By \Cref{L:propder}(1) it thus follows that $\pi^\lor\cong \pi'$.
\end{proof}
\begin{theorem}\label{T:soupedup}
    Let $\pi\in \irr(G(W))$ and assume $s\in \mathbb{R}_{\ge 0}$. Then
    \[\dim_\C\ho_{G(W)\times G(W)}(I(s,\chi),\pi\otimes \chi(\det_{G(W^-)})\pi')=\delta_{\pi',\pi^\lor}.\]
\end{theorem}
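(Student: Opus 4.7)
The plan is to combine \Cref{T:cons} with \Cref{L:only} via the Kudla--Rallis filtration
\[0\subseteq I_0\subseteq \ldots\subseteq I_{q_W}=I(s,\chi)\]
whose successive quotients are the representations $\sigma_{t,s}$.

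First I would split into two cases according to whether $\pi'\cong \pi^\lor$ or not. In the case $\pi'\cong \pi^\lor$, the statement is exactly \Cref{T:cons}, so there is nothing to do beyond noting that the value is $1$.

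For the remaining case $\pi'\not\cong \pi^\lor$ I would argue by induction on $t$ that
\[\ho_{G(W)\times G(W)}(I_t,\pi\otimes \chi(\det_{G(W^-)})\pi')=0.\]
The short exact sequence $0\to I_{t-1}\to I_t\to \sigma_{t,s}\to 0$ together with left-exactness of $\ho(-,\pi\otimes \chi(\det_{G(W^-)})\pi')$ gives the exact sequence
\[0\to \ho(\sigma_{t,s},\pi\otimes \chi(\det_{G(W^-)})\pi')\to \ho(I_t,\pi\otimes \chi(\det_{G(W^-)})\pi')\to \ho(I_{t-1},\pi\otimes \chi(\det_{G(W^-)})\pi').\]
By \Cref{L:only} the leftmost term vanishes (since $s\in\mathbb{R}_{\ge 0}$ and $\pi'\not\cong \pi^\lor$), and by induction the rightmost term vanishes as well. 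Taking $t=q_W$ yields the desired vanishing for $I(s,\chi)$.

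There is essentially no obstacle here: the heavy lifting has already been done in \Cref{T:cons} and, in particular, in the derivative-based proof of \Cref{L:only}. The only point to verify is the base case $t=0$, which follows immediately from \Cref{L:only} applied with $t=0$ (where $\sigma_{0,s}=I_0$ is the Schwartz space piece, and the calculation in the $t=0$ step of \Cref{L:only} shows the Hom vanishes unless $\pi'\cong \pi^\lor$).
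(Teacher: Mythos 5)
Your proposal is correct and follows essentially the same route as the paper: the case $\pi'\cong\pi^\lor$ is exactly \Cref{T:cons}, and the case $\pi'\not\cong\pi^\lor$ is reduced along the Kudla--Rallis filtration to the graded pieces $\sigma_{t,s}$ (including the $t=0$ Schwartz-space piece), where \Cref{L:only} gives the vanishing; your induction with left-exactness of $\ho(-,\pi\otimes\chi(\det_{G(W^-)})\pi')$ is just a spelled-out version of the paper's observation that a nonzero map out of $I(s,\chi)$ induces a nonzero map out of some subquotient $\sigma_{t,s}$.
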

\begin{proof}
    If $\pi'\cong \pi^\lor$, the one-dimensionality of the space follows from \Cref{T:cons}. On the other hand, if there exists a non-zero morphism \[I(s,\chi)\ra\pi\otimes \chi(\det_{G(W^-)})\pi',\] there exists $\ain{t}{1}{q_W}$ and a non-zero morphism $\sigma_{t,s}\ra\pi\otimes \chi(\det_{G(W^-)})\pi'$, which by \Cref{L:only} shows that $\pi'\cong \pi^\lor$.
\end{proof}
\begin{rem}\label{Rem}
    \Cref{T:soupedup} does not hold for arbitrary $s\in\C$, as the following example shows. Let $V^+$ and $V^-$ be the two orthogonal spaces with trivial discriminant character and Hasse-Weil invariant $\pm 1$ and dimension $n+1$. Set \[\pi_+=\Theta_{W,V^+,\psi}(\mathbf{1}_{H(V^+)}),\, \pi_-=\Theta_{W,V^-,\psi}(\mathbf{1}_{H(V^-)}).\] Then $\pi_+,\pi_-\in \irr(G(W))$ and they are non-isomorphic. For $s_0=-\frac{n}{4}$, the degenerate principal series $I(s_0,\chi)$ admits as a quotient \[\mathbf{1}_{G_{\frac{n}{2}}}\rtimes \mathbf{1}_1\otimes \mathbf{1}_{G_{\frac{n}{2}}}\rtimes \mathbf{1}_1,\]
    which by \cite[Proposition 7.2]{GanIchino} is of the form $(\pi_+\oplus\pi_-)\otimes(\pi_+\oplus \pi_-)$.
\end{rem}
\section{Howe duality conjecture in type I}
We will now come to the main theorem.
\begin{theorem}
    Let $\pi,\pi'\in \irr(G(W))$. Then:
    \begin{enumerate}
        \item The representation $\theta_{V,W,\psi}(\pi)$ is irreducible.
        \item If $\theta_{V,W,\psi}(\pi)\cong \theta_{V,W,\psi}(\pi')\neq 0$, then $\pi\cong \pi'$.
    \end{enumerate}
\end{theorem}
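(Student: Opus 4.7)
The plan is to combine see-saw duality for the doubled Weil representation with Theorem~\ref{T:soupedup}. Recall that for the dual pair $(G(W+W^-), H(V))$ one has the see-saw
\[
G(W+W^-) \supset G(W) \times G(W^-), \qquad H(V) \times H(V) \supset H(V)^\Delta,
\]
together with a Kudla--Rallis intertwining map $\omega_{V,W+W^-,\psi} \to I(s_V,\chi_V)$, where $\chi_V$ is the quadratic character attached to $V$ and $s_V$ is a specific half-integer determined by $\dim V$, $\dim W$ and the type of the pair. Unwinding this see-saw, and using the MVW-involution to identify $G(W^-)$-representations with $G(W)$-representations, yields for $\pi,\pi' \in \irr(G(W))$ a natural inclusion
\[
\ho_{H(V)}\bigl(\Theta_{V,W,\psi}(\pi), \Theta_{V,W,\psi}(\pi'\mvw)^\vee\bigr) \hra \ho_{G(W) \times G(W)}\bigl(I(s_V,\chi_V), \pi^\vee \otimes \chi_V(\det_{G(W)})\pi'\bigr).
\]

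For $s_V \geq 0$, Theorem~\ref{T:soupedup} collapses the right-hand side to $\C$ precisely when $\pi' \cong \pi\mvw \cong \pi^\vee$ and to $0$ otherwise, bounding the left-hand side accordingly. Both parts of the theorem then follow. For part (2), if $\theta_{V,W,\psi}(\pi) \cong \theta_{V,W,\psi}(\pi') \neq 0$, the composition
\[
\Theta_{V,W,\psi}(\pi) \sra \theta_{V,W,\psi}(\pi) \xrightarrow{\sim} \theta_{V,W,\psi}(\pi') \hra \Theta_{V,W,\psi}(\pi'\mvw)^\vee
\]
(using that the MVW-involution of the semisimple representation $\theta(\pi')$ is its contragredient, so that $\theta(\pi')$ sits as the socle of $\Theta(\pi'\mvw)^\vee$) defines a non-zero element of the left-hand side, forcing $\pi \cong \pi'$. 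For part (1), applying the argument with $\pi=\pi'$ and invoking the one-dimensionality already guaranteed by Theorem~\ref{T:cons}, the space $\ho_{H(V)}(\Theta(\pi),\theta(\pi))$ is exactly one-dimensional; since each distinct irreducible summand $\sigma_i$ of the semisimple representation $\theta(\pi)$ would contribute a linearly independent element via the projection $\Theta(\pi) \sra \theta(\pi) \sra \sigma_i$, the representation $\theta(\pi)$ must consist of a single irreducible with multiplicity one.

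The main obstacle is the case $s_V < 0$, which arises when $\dim V$ is strictly smaller than $\dim W$ plus a small offset; Theorem~\ref{T:soupedup} genuinely fails in this range, as Remark~\ref{Rem} illustrates. To deal with this, one may pass up the Witt tower to some $V' \supset V$ with $s_{V'} \geq 0$ and transfer the Howe-duality conclusion back to $V$ using Kudla's tower property; alternatively, one invokes the conservation relation of \cite{sun} to reduce to a tower in which the given $V$ already lies in the stable range $s_V \geq 0$.
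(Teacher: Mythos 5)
The first half of your argument is essentially the paper's: the see-saw for $G(W+W^-)$ against $H(V)\times H(V)$, the Kudla--Rallis identification of the doubled theta lift of $\chi_W$ with a quotient of a degenerate principal series, and \Cref{T:soupedup} together bound $\dim_\C\ho_{H(V)}(\theta_{V,W,\psi}(\pi'),\theta_{V,W,\psi}(\pi))$ by $\delta_{\pi,\pi'}$, and your deduction of (1) and (2) from that bound is fine. Two caveats, though: what is needed is the surjection $I(s_{V,W},\chi_V)\sra \restr{\Theta_{V,W+W^-,\psi}(\chi_W)}{G(W)\times G(W^-)}$ of \cite[Proposition 8.2]{GanIchino}; a map $\omega_{V,W+W^-,\psi}\ra I(s,\chi)$ in the direction you wrote bounds the wrong Hom-space. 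And in the normalization of that surjection one has $s_{V,W}=\frac{n+\epsilon_0-m}{2}$, so the range where \Cref{T:soupedup} applies is $m\le n+\epsilon_0$: the problematic case is $\dim V$ \emph{large} relative to $\dim W$, not small.

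The genuine gap is your treatment of that remaining case, and neither of your two suggestions closes it. Kudla's tower property only says that non-vanishing of the big theta lift persists as one goes up a Witt tower; it provides no mechanism to transfer irreducibility of $\theta_{V',W,\psi}(\pi)$, or injectivity of $\pi\mapsto\theta_{V',W,\psi}(\pi)$, from one member $V'$ of the tower to the given $V$. The conservation relation of \cite{sun} likewise controls only the first occurrence indices of a fixed $\pi$ in the two companion towers; the space $V$ is given in the statement, so one cannot ``reduce to a tower in which $V$ lies in the stable range,'' and non-vanishing information is in any case not what is needed here. Moreover \Cref{Rem} shows the principal-series input genuinely fails at the bad value of $s$, so something must replace it. The paper's fix is a symmetry argument: when $m>n+\epsilon_0$ one doubles $V$ instead of $W$, where the relevant parameter $s_{W,V}$ is nonnegative, so the same see-saw argument proves Howe duality for lifts from $H(V)$ to $G(W)$; one then transfers back using that $\theta_{V,W,\psi}(\pi)\neq 0$ forces $\pi$ to be a direct summand of $\theta_{W,V,\psi}(\theta_{V,W,\psi}(\pi))$, so every irreducible summand $\sigma$ of $\theta_{V,W,\psi}(\pi)$ satisfies $\theta_{W,V,\psi}(\sigma)\cong\pi$, and the already-established irreducibility and injectivity in the reversed direction yield (1) and (2) for the original one. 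Your final paragraph should be replaced by this role-reversal argument.
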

\begin{proof}
    Let $\chi_W$ be the discriminant character of $W$ and $\chi_V$ the discriminant character of $V$. As explained for example in \cite[§4]{GanTakeda}, the see-saw-diagram
    \[\begin{tikzcd}
G(W+W^-)\arrow[rrd,dash]\arrow[d,dash]&&\arrow[lld,dash]\arrow[d,dash]H(V)\times H(V)\\
        G(W)\times G(W^-)&&\Delta H(V)
    \end{tikzcd}\]
    shows the equality
    \[\dim\ho_{G(W)\times G(W)}(\Theta_{V,W+W^-,\psi}(\chi_W),\pi'\otimes \chi_V(\det_{G(W)})\pi^\lor)=\]\[=\dim_\C\ho_{\Delta H(V)}(\Theta_{V,W,\psi}(\pi')\otimes \Theta_{V,W,\psi}(\pi)^{\mathrm{MVW}},\C)\] and it is easy to see that the dimension of the latter space is bounded below by\[\dim_\C\ho_{H(V)}(\theta_{V,W,\psi}(\pi), \theta_{V,W,\psi}(\pi')).\]
    It thus suffices to show that \[\dim\ho_{G(W)\times G(W)}(\Theta_{V,W+W^-,\psi}(\chi_W),\pi'\otimes \chi_V(\det_{G(W)})\pi^\lor)=\delta_{\pi',\pi}.\]
    The inequality $``\ge"$ is easy to see. To obtain the inequality $``\le "$, we recall from \cite[Proposition 8.2]{GanIchino}
    that \[I(s_{V,W},\chi_v)\sra \restr{\Theta_{V,W+W^-,\psi}(\chi_W)}{G(W)\times G(W^-)},\] where $s_{V,W}=\frac{n+\epsilon_0-m}{2}$ and \[\epsilon_0=\begin{cases}
        \epsilon& E=F,\\0& [E:F]=2.
    \end{cases}\]
Thus \[\dim\ho_{G(W)\times G(W)}(\Theta_{V,W+W^-,\psi}(\chi_W),\pi'\otimes \chi_V(\det_{G(W)})\pi^\lor)\le \]\[\le \dim\ho_{G(W)\times G(W)}(I(s_{V,W},\chi_V)\pi'\otimes \chi_V(\det_{G(W)})\pi^\lor).\]
    If $m\le n+\epsilon_0$, the claim thus follows immediately from \Cref{T:soupedup}.
    If $m>n+\epsilon_0$, we note that $\theta_{V,W,\psi}(\pi)\neq 0$ implies that $\pi$ is a direct summand of $\theta_{W,V,\psi}(\theta_{V,W,\psi}(\pi))$, hence we can just switch the role of $V$ and $W$ and the claim follows precisely by the same argument.
\end{proof}
\bibliographystyle{amsalpha}
\bibliography{reference.bib}
\end{document}